\documentclass[12pt,a4paper]{amsart}

\usepackage{amssymb}
\usepackage{amsthm}
\usepackage{amsmath}
\usepackage{graphicx}
\usepackage{enumerate}
\usepackage{xcolor}
\usepackage{tikz}
\usepackage{graphbox}
\usepackage{orcidlink}
\usepackage{mathrsfs}  
\usepackage{hyperref}
\hypersetup{colorlinks=true,allcolors=blue}
\usepackage[margin=2.5cm]{geometry}

\newtheorem{theorem}{Theorem}[section]
\newtheorem{proposition}[theorem]{Proposition}

\newtheorem{lemma}[theorem]{Lemma}

\theoremstyle{definition}

\newtheorem{definition}[theorem]{Definition}

\newcommand{\II}{\mathbb{I}}
\newcommand{\RR}{\mathbb{R}}
\newcommand{\CC}{\mathscr C}

\newcommand{\A}{\mathcal{A}}
\newcommand{\B}{\mathcal{B}}
\newcommand{\C}{\mathcal{C}}
\newcommand{\D}{\mathcal{D}}
\newcommand{\E}{\mathcal{E}}
\newcommand{\F}{\mathcal{F}}
\newcommand{\PP}{\mathcal{P}}
\newcommand{\T}{\mathcal{T}}

\title[The relation between non-exchangeability and radial asymmetry]{The relation between non-exchangeability and radial asymmetry of copulas}

\author[D. Kokol Bukovšek]{Damjana Kokol Bukovšek \orcidlink{0000-0002-0098-6784} }
\address{University of Ljubljana, School of Economics and Business, and Institute of Mathematics, Physics and Mechanics, Ljubljana, Slovenia}
\email{Damjana.Kokol.Bukovsek@ef.uni-lj.si}

\author[T. Košir]{Tomaž Košir \orcidlink{0000-0002-3661-8094} } 
\address{University of Ljubljana, Faculty of Mathematics and Physics, and Institute of Mathematics, Physics and Mechanics, Ljubljana, Slovenia}
\email{Tomaz.Kosir@fmf.uni-lj.si}

\author[B. Mojškerc]{Blaž Mojškerc \orcidlink{0000-0001-8096-355X} }
\address{University of Ljubljana, School of Economics and Business, and Institute of Mathematics, Physics and Mechanics, Ljubljana, Slovenia}
\email{Blaz.Mojskerc@ef.uni-lj.si}

\author[M. Omladič]{Matjaž Omladič \orcidlink{0000-0001-5383-9203} } 
\address{University of Ljubljana, Faculty of Mathematics and Physics,  Ljubljana, Slovenia}
\email{Matjaz@Omladic.net}

\keywords{copula; dependence concepts; non-exchangeability; radial asymmetry}
\subjclass[2020]{62H05, 60E05}

\begin{document}

\begin{abstract}
Non-exchangeability and radial asymmetry have got substantial attention in the literature, but relation between the two has not been studied yet. We propose the question: if a copula has a given degree of radial asymmetry, what is the range of possible values of the measure of non-exhangeability.
We determine the set of all points whose first coordinate is the radial asymmetry of a copula and the second coordinate is its non-exchangeability. We show that this set is a non-convex hexagon.
\end{abstract}

\maketitle

\section{Introduction} \label{sec:intro}

Bivariate copulas capture the dependence structure between two random variables.
We can think of a copula as a function of two variables
\[
    C \colon (u,v) \mapsto C(u,v), \ u,v\in\II,
\]
where $\II=[0,1]$, 
satisfying certain properties, see Section~\ref{sec:prelim}.
The domain of a copula is the unit square $\II^2$, which has symmetry group $D_4$, the dihedral group with $8$ elements.
The symmetries of the unit square give rise to various types of symmetry of copulas. The most natural one is an \emph{exchangeability}, i.e., a copula is exchangeable if $C(u,v)=C(v,u) =\colon C^T(u,v)$ for every $u,v\in \II$. The name comes from the fact that the copula does not change when the variables are exchanged. Sometimes exchangeability of copulas is called just symmetry \cite{KleMes06}.

Many important copulas are exchangeable, such as the independence copula,  the Fr\'echet-Hoeffding bounds, Archimedean copulas \cite{Nels}, etc. 
On the other hand, an important example of non-exchange\-able copulas are Marshall-Olkin copulas, see \cite{MaOl67, Ma93}. Non-exchangeability of various classes of copulas including Marshall copulas was studied in \cite{KoKoMoOm19}.

Another type of symmetry occurs when a copula $C$ is equal to its survival copula $\widehat{C}$, see Section~\ref{sec:prelim}. This type of symmetry is called \emph{radial symmetry}. Survival copulas arise naturally in the shock model theory when we replace life times with survival times.

Two more types of symmetries arise from reflections of copulas with respect to the first or the second variable, defined as
$$ C^{\sigma_1}(u,v)=v-C(1-u,v) \qquad \text{and} \qquad C^{\sigma_2}(u,v)=u-C(u,1-v).$$
Note that $(C^{\sigma_1})^{\sigma_2}=(C^{\sigma_2})^{\sigma_1}=\widehat{C}$, so that the symmetry with respect to both of these two transformations implies radial symmetry.
It turns out that transformations
\[
    C\to C, C^{\sigma_1}, C^{\sigma_2}, \widehat{C}, C^T, (C^{\sigma_1})^T,  (C^{\sigma_2})^T, \ \mbox{and}\ \widehat{C}^T
\]
form a group of transformations (with respect to composition of transformations) on the set of bivariate copulas. This group is isomorphic to the dihedral group $D_4$ (see \cite[Section 2]{FuSc14}). We say that a copula $C$ is \emph{$D_4$-symmetric} if it is left unchanged by all the transformations of this group.

When dealing with real-world data, the degree of various types of asymmetry is of interest. The measure of non-exhangeability of a copula $C$ is
$$\mu_T(C) = \max\big\{|C(u,v)-C^T(u,v)| \colon u,v\in \II\big\},$$
while the measure of its radial asymmetry is
$$\mu_R(C) = \max\big\{|C(u,v)-\widehat{C}(u,v)| \colon u,v\in \II\big\}.$$

Non-exchangeability of copulas was first studied by Klement and Mesiar in \cite{KleMes06} where  the asymmetry function was introduced, and by Nelsen in \cite{Ne07}. There, both non-exchange\-abi\-lity and radial asymmetry, and their asymmetry functions were considered. Later, non-exchangeability in norms other than the $L_{\infty}$-norm was studied  in \cite{DuKlSeUF10,GeNe13}, and the radial asymmetry in \cite{AVDo18,DeDoUF13}. Genest and Ne\v slehov\'a gave statistical tests for (non-)exchangeability  in \cite{GeNe13} and statistical tests for radial symmetry in \cite{GeNe14}. The topic was explored also in \cite{BouChe2012,GeNeQu12}, and recently in \cite{BeaSeo2020,LyuBel2023}. Symmetries of $d$-variate  copulas for $d\ge 2$ were defined and examined by Krupskii \cite{Kru17}. The $D_4$-symmetry of bivariate copulas was studied in \cite{FuSc14} and the relationship of non-exchangeability with various concordance measures in \cite{KoKoMoOmConcordances2019}.  
Non-exchangeability and radial asymmetry of copulas are important in applications, e.g., in finance \cite{KiAcEv23, Patton}, in reliability theory \cite{Wu14}, in pricing \cite{Jas05}, and in insurance \cite{KiAs25}.

The main goal of this paper is to study the relation between non-exchangeability and  radial asymmetry. We propose the question: if a copula has a given degree of radial asymmetry, what is the range of possible values of the measure of non-exhangeability.
Our main result determines the set
\[
   \Omega_{R,T}= \{(\mu_R(C),\mu_T(C)): C \ \text{a bivariate copula}\}.
\]
It turns out that this set is a non-convex hexagon, see Figure~\ref{fig:02}.

The paper is organized as follows. Section~\ref{sec:prelim} brings preliminaries, Section~\ref{sec:main} exhibits the details of the result and some first lemmas. Section~\ref{sec:bounds} is devoted to the proof that every point of the set $\Omega_{R,T}$ belongs to the hexagon, and Section~\ref{sec:realization} to the proof that every point of the hexagon belongs to the set $\Omega_{R,T}$. We finish the paper with some concluding remarks in Section~\ref{sec:concluding}.

\section{Preliminaries} \label{sec:prelim}

Let $\II$ be the unit interval $[0,1]\subseteq\RR$ and $u_1, u_2, v_1, v_2 \in \II$ be such that $u_1 \le u_2$ and $v_1 \le v_2$. The Cartesian product of intervals $R=[u_1,u_2]\times [v_1,v_2]$ is called a {\it rectangle} in $\II^2$. Let $H:\II^2 \to \RR$ be a real function. We define the $H$-volume of $R$ as
$V_H(R)=H(u_2,v_2)-H(u_2,v_1)-H(u_1,v_2)+H(u_1,v_1)$.
A {\it bivariate copula} is a function $C:\II^2 \to \II$ with the following properties:
\begin{itemize}
    \item $C(0,v)=C(u,0)=0$ for all $u,v \in \II$ ($C$ is {\it grounded}),
    \item $C(u,1)=u$ and $C(1,v)=v$ for all $u,v \in \II$ ($C$ has {\it uniform marginals}), and
    \item $V_C(R) \geq 0$ for every rectangle $R \subseteq \II^2$ ($C$ is {\it $2-$increasing}).
\end{itemize}
Bivariate copulas are bivariate distribution functions with uniform one-dimensional mar\-gi\-nals. We can obtain any bivariate distribution function by inserting marginal distribution functions into a copula, a famous theorem by Sklar \cite{Sklar}.
The set of all bivariate copulas will be denoted by $\CC$. It is well known that this set is compact in the supremum norm. We refer to \cite{DuSe,Nels} for this and other general statements of the copula theory.

Recall that the lower and upper bound for bivariate copulas are the Fr\'{e}chet-Hoeffding bounds
$$W(u,v)=\max \{0,u+v-1\} \qquad \text{and} \qquad M(u,v)=\min \{u,v \}.$$
For any copula $C \in \CC$ the function $\widehat{C}:\II^2 \to \II$ defined by
$$\widehat{C}(u,v) = u + v - 1 +C(1-u, 1-v)$$
is again a copula, called the \emph{survival copula} of $C$. Furthermore, the function $C^T:\II^2 \to \II$ defined by
$$C^T(u,v) = C(v,u)$$
is also a copula, called the \emph{transposed  copula} of $C$.

We consider two types of asymmetry of copulas, the usual asymmetry with respect to transposition, which we call non-exchangeability to avoid confusion, and radial asymmetry.

\begin{definition}
For any copula $C$ its \emph{non-exchangeability} is defined as the supremum norm of the difference $C - C^T$, i.e.,
$$ \mu_T(C) = \max\big\{|C(u,v)-C(v,u)| \colon u,v\in \II\big\}.$$
\end{definition}

The non-exchangeability of any copula $C$ lies in the interval $[0,\frac13]$, see \cite{KleMes06,Ne07}. If $\mu_T(C) =0$, then $C=C^T$ and $C$ is called \emph{exchangeable}. 

\begin{definition}
For any copula $C$ its \emph{radial asymmetry} is defined as the supremum norm of the difference $C - \widehat{C}$, i.e.,
$$ \mu_R(C) = \max\big\{|C(u,v)-\widehat{C}(u,v)| \colon u,v\in \II\big\}.$$
\end{definition}

The radial asymmetry of any copula $C$ lies in the interval $[0,\frac13]$, see \cite{Ne07}. If $\mu_R(C) =0$, then $C=\widehat{C}$ and $C$ is called \emph{radially symmetric}. 

In  \cite{KoKoMoOm19}, the so-called maximal asymmetry function is presented; its value at a fixed point $(u, v) \in \II^2$ is given by
\begin{equation} \label{eq:dT}
    d_T(u,v) = \sup\big\{|C(u,v)-C(v,u)| \colon C\in \CC\big\}.
\end{equation}
Since $\CC$ is compact, the supremum is attained and it can be replaced by maximum. Similarly, we define
\begin{equation} \label{eq:dR}
    d_R(u,v) = \sup\big\{|C(u,v)-\widehat{C}(u,v)| \colon C\in \CC\big\}.
\end{equation}

\begin{proposition}[\cite{KleMes06,Ne07}]\label{pr:d}
The maximal asymmetry functions are
\begin{align*}
       d_T(u,v) &= \min\big\{u, v, 1-u, 1-v, |u-v|\big\}, \text{ and}\\
       d_R(u,v) &= \min\big\{u, v, 1-u, 1-v, \max\{|u-v|, |u+v-1|\}\big\}.
\end{align*}
\end{proposition}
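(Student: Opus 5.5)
The plan has two parts for each formula: an upper bound valid for every copula, and a copula that attains it at the given point $(u,v)$. For the upper bounds I will use only the Fr\'echet--Hoeffding inequalities $W\le C\le M$ and the $2$-increasing property.

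\emph{Upper bound for $d_T$.} Fix $C\in\CC$ and assume without loss of generality $u\le v$. Groundedness together with $C\le M$ gives $0\le C(u,v),C(v,u)\le u$, so $|C(u,v)-C(v,u)|\le \min\{u,v\}$. Applying the same argument to $\widehat C$ and rewriting $C-C^T$ through $\widehat C-\widehat C^T$ at $(1-u,1-v)$ yields the bound $\min\{1-u,1-v\}$. For the $|u-v|$ term, take the rectangle $[u,v]\times[0,u]$, whose $C$-volume is $C(v,u)-C(u,u)$. This volume lies between $0$ and $v-u$, the latter because the rectangle sits inside the strip $[u,v]\times\II$ of mass $v-u$. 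In the same way $C(u,v)-C(u,u)\in[0,v-u]$, using the strip $\II\times[u,v]$. Subtracting the two gives $|C(u,v)-C(v,u)|\le v-u$.

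\emph{Upper bound for $d_R$.} The first four terms are handled as before, since $C$ and $\widehat C$ both lie in $[W,M]$. For example, $|C-\widehat C|\le M-W=\min\{u,v,1-u,1-v\}$, which already gives the first four terms. The remaining term needs $|C(u,v)-\widehat C(u,v)|\le\max\{|u-v|,|u+v-1|\}$. Write
\[
C(u,v)-\widehat C(u,v)=V_C([0,u]\times[0,v])-V_C([u,1]\times[v,1]).
\]
The lower-left and upper-right rectangles have masses $a$ and $b$, while the other two corner rectangles have masses $u-a$ and $v-a$. This expresses $b$ in terms of $a$, so the difference becomes an affine function of $a=C(u,v)$. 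Bounding $a$ between $W$ and $M$ then produces exactly $|u+v-1|$ or $|u-v|$, according to which Fr\'echet bound is active. This case analysis over the four regions of $\II^2$ cut out by the diagonals is the fiddly step; I expect it to yield $\max\{|u-v|,|u+v-1|\}$ directly.

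\emph{Attainment.} For each $(u,v)$ I will construct a shuffle of $M$, or a singular copula whose mass is spread uniformly on a few segments, so that every inequality used above is tight simultaneously. For $d_T$, the standard choice puts mass on segments that move $[0,t]$ to $[u,\,u+t]$ for a suitable $t$. The main obstacle will be building a single extremal copula for $d_R$ in each region; I would first try shuffles of $M$ with three pieces and check them region by region. Since the result is cited from \cite{KleMes06,Ne07}, if the construction becomes too involved I will appeal to the copulas given there.
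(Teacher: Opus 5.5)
The paper gives no proof of this proposition; it quotes it from Klement--Mesiar and Nelsen. Your argument therefore has to stand on its own.

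The $d_T$ upper bound is fine. $W\le C\le M$ gives $|C(u,v)-C(v,u)|\le M(u,v)-W(u,v)=\min\{u,v,1-u,1-v\}$, and your two strip estimates give $|u-v|$.

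\textbf{The gap is in the fifth term of $d_R$.} Your identity
\[
C(u,v)-\widehat C(u,v)=V_C([0,u]\times[0,v])-V_C([u,1]\times[v,1])
\]
is false. $V_C([u,1]\times[v,1])=1-u-v+C(u,v)$ is the joint survival function $\overline C(u,v)$, whereas the survival copula is $\widehat C(u,v)=V_C([1-u,1]\times[1-v,1])$, the mass of the corner at $(1-u,1-v)$. With your formula you get $b=1-u-v+a$, so $a-b\equiv u+v-1$ for every copula. Your ``affine function of $a$'' thus has slope zero, and the $W$/$M$ bounds on $a$ produce nothing. The identity already fails for $C=M$ at $(\tfrac14,\tfrac14)$, where the left side is $0$ and the right side is $-\tfrac12$. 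The true quantity is
\[
C(u,v)-\widehat C(u,v)=C(u,v)-C(1-u,1-v)+1-u-v,
\]
which involves $C$ at two different points. Bounding $C(u,v)$ and $C(1-u,1-v)$ separately between $W$ and $M$ only gives back $\min\{u,v,1-u,1-v\}$, so you need an estimate that couples the two values.

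That coupling comes from monotonicity and the $1$-Lipschitz property. The right case split is by the medians $u=\tfrac12$, $v=\tfrac12$, not the diagonals, since $|u-v|\ge|u+v-1|$ if and only if $(2u-1)(1-2v)\ge 0$.
\begin{itemize}
\item If $u,v\le\tfrac12$, then $(u,v)\le(1-u,1-v)$ coordinatewise. Hence $0\le C(1-u,1-v)-C(u,v)\le 2(1-u-v)$, which gives $|C-\widehat C|\le 1-u-v$.
\item If $u\le\tfrac12\le v$, write $C(u,v)-C(1-u,1-v)$ as the volume of $[0,u]\times[1-v,v]$, which lies in $[0,2v-1]$, minus the volume of $[u,1-u]\times[0,1-v]$, which lies in $[0,1-2u]$. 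This gives $u-v\le C-\widehat C\le v-u$.
\item The remaining two quadrants follow from $(C-\widehat C)(1-u,1-v)=-(C-\widehat C)(u,v)$.
\end{itemize}
In each quadrant the resulting bound is exactly $\max\{|u-v|,|u+v-1|\}$.

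Attainment is also incomplete: it is only sketched for $d_T$ and merely promised for $d_R$. For $d_R$ you still need explicit extremal copulas region by region. For example, when $u\le v\le\tfrac12$, use a shuffle of $M$ with no mass in $[1-u,1]\times[1-v,1]$ and $C(u,v)=\min\{u,1-u-v\}$, which equals $d_R(u,v)$ there. Alternatively, make an explicit appeal to the constructions in the cited sources.
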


The maximal asymmetry functions $d_T$ and $d_R$ are depicted in Figure~\ref{fig:01}.

\begin{figure} 
    \centering
    \includegraphics[width=0.35\linewidth]{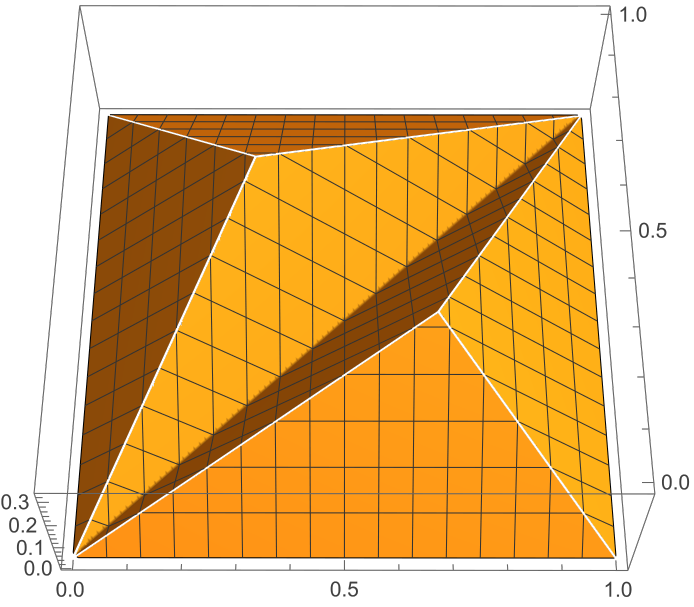} \hspace{1cm}
    \includegraphics[width=0.35\linewidth]{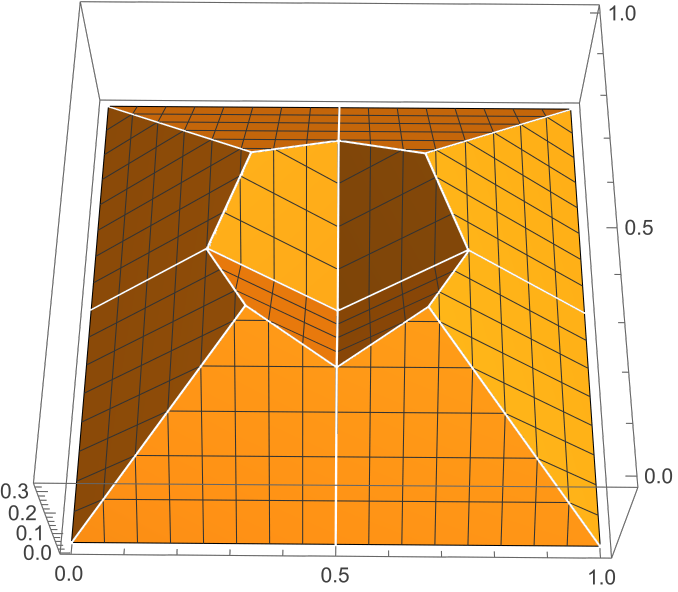}
    \caption{The maximal asymmetry functions - $d_T$ on the left and $d_R$ on the right.}
    \label{fig:01}
\end{figure}

A \emph{shuffle of $M$} is a copula
$$ C = M(n, J, \pi, \omega),$$
which is determined by a positive integer $n$, a partition
$J=\{J_1,J_2,\ldots, J_n\}$ of the interval $\II$ into $n$ pieces, where $J_i=[u_{i-1},u_i]$ and $0 =u_0 \le u_1 \le u_2 \le \ldots \le u_{n-1} \le u_n=1$, shortly written as $(n-1)$-tuple of splitting points $J=(u_1, u_2, \ldots, u_{n-1})$,
a permutation $\pi \in S_n$, written as $n$-tuple of images $\pi = (\pi(1), \pi(2), \ldots, \pi(n))$, and a mapping $\omega: \{1, 2, \ldots, n\} \to \{-1, 1\}$, written as $n$-tuple of images $\omega = (\omega(1), \omega(2), \ldots, \omega(n))$, (compare \cite[\S{3.2.3}]{Nels}). The mass of $C$ is concentrated on the main diagonals of the squares $[u_{i-1},u_i] \times  [v_{\pi(i)-1} \times v_{\pi(i)}]$ where $0= v_0 \le v_1 \le v_2\le \ldots \le v_{n-1} \le v_n = 1$ whenever $\omega(i) = 1$, and on opposite diagonals of the same squares when $\omega(i) = -1$. 

\section{The main result} \label{sec:main}

In this paper we are interested in the relation between non-exchangeability and radial asymmetry. More precisely, what can we tell about $\mu_T(C)$ for some copula $C$ if we know the value of $\mu_R(C)$, or vice versa. As we are going to see, there does not exist a copula with $\mu_R(C) = 0$ and $\mu_T(C) = \tfrac13$. On the other hand, many other combinations of values $\mu_R(C)$, $\mu_T(C)$ are possible. As our main result, we are going to determine the set of all possible pairs $(\mu_R(C),\mu_T(C)), C \in \CC$, i.e. the set
$$\Omega_{R,T} = \{(\mu_R(C),\mu_T(C)) : C \in \CC\}.$$
By observations in Section~\ref{sec:prelim} it is obvious that $\Omega_{R,T} \subseteq [0,\tfrac13]\times [0,\tfrac13]$. We will show that this set is a non-convex hexagon. 

\begin{theorem} \label{th:main}
   The set of all possible pairs $(\mu_R(C),\mu_T(C)), C \in \CC$ is a hexagon $$\Omega_{R,T} = \A\B\C\D\E\F$$ with vertices
   $$\A(0,0), \quad \B(\tfrac13,0), \quad \C(\tfrac13,\tfrac16), \quad \D(\tfrac{3}{10},\tfrac15), \quad \E(\tfrac13,\tfrac13), \quad \F(0,\tfrac14).$$
\end{theorem}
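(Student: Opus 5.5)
Two inclusions are needed: $\Omega_{R,T}\subseteq \A\B\C\D\E\F$ and the reverse.

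\emph{Upper bounds.} The hexagon is the rectangle $[0,\tfrac13]^2$ with two regions removed. One is the triangle above the segment $\F\E$, i.e.\ the inequality $\mu_T\le \tfrac14+\tfrac14\mu_R$. The other is the region above the polygonal line $\C\D\E$. Since $\C\D$ has slope $-1$ and $\D\E$ has slope $4$, that region is described by the two inequalities $\mu_T\le \tfrac12-\mu_R$ or $\mu_T\ge 4\mu_R-1$ (for $\mu_R$ near $\tfrac13$).

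\emph{Key pointwise identity.} The basic tool is that $C-\widehat C$ and $C-C^T$ are linked through the $D_4$ action. Specifically,
\[
C(u,v)-C(v,u) = [C(u,v)-\widehat C(u,v)] + [\widehat C(u,v)-\widehat C(v,u)] + [\widehat C(v,u)-C(v,u)],
\]
and $\widehat C(u,v)-\widehat C(v,u)=C(1-u,1-v)-C(1-v,1-u)$. Hence the antisymmetric part at $(u,v)$ equals the antisymmetric part at $(1-v,1-u)$ up to an error of at most $2\mu_R$.

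Suppose $\mu_T$ is attained at $(u,v)$ with value $t$. Then the mass of $C$ in suitable rectangles is bounded below by $t$ at $(u,v)$, and by $t-2\mu_R$ at the reflected point. Adding the Fréchet-type constraints $C\ge W$, $C\le M$ and $V_C\ge0$ on the rectangles cut out by $u,v,1-u,1-v$ should give the bounds. I would follow the proof of $\mu_T\le\tfrac13$ in \cite{KleMes06}, which rests on $V_C([u,v]\times[0,u])$-type estimates, and redo it while keeping track of the extra information from radial asymmetry.

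\emph{Case split.} Without loss of generality $u<v$, $u+v\le1$, using the $D_4$ group: transposition together with the survival map preserves both $\mu_T$ and $\mu_R$. Then split according to whether $(1-v,1-u)$ falls near the point $(\tfrac13,\tfrac23)$, where $d_T$ is maximal by Proposition~\ref{pr:d}. The bound $\mu_T\le\tfrac14(1+\mu_R)$ should come from combining the two antisymmetry estimates linearly. The $\C\D\E$ bounds come from an estimate where $\mu_R$ close to $\tfrac13$ forces the mass to concentrate near $(u,v)=(\tfrac13,\tfrac23)$-type points, where $d_R$ is also nearly extremal. The main obstacle is finding the right rectangles and the right linear combination for the $\C\D\E$ part, and showing that the two regimes glue exactly at $\D(\tfrac3{10},\tfrac15)$. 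I would first guess the extremal shuffles at the vertices and read off which inequalities are tight there.

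\emph{Realization.} Using shuffles of $M$, construct explicit copulas realizing each vertex:
\begin{itemize}
\item $\A$: $M$.
\item $\B$: a symmetric shuffle with $\mu_R=\tfrac13$, e.g.\ $M(3,(\tfrac13,\tfrac23),(2,3,1)\dots)$ suitably symmetrized.
\item $\E$: the Nelsen copula attaining $\mu_T=\tfrac13$, checking that $\mu_R=\tfrac13$.
\item $\F$: a radially symmetric shuffle with $\mu_T=\tfrac14$, e.g.\ with partition into quarters.
\item $\C$ and $\D$: suitable shuffles found from the tight inequalities.
\end{itemize}
To fill the hexagon, I would use continuity. Both $\mu_R$ and $\mu_T$ are $2$-Lipschitz in the sup norm, and $\CC$ is convex and path connected. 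Along segments $C_s=(1-s)C_0+sC_1$ the image point moves continuously, so boundary edges can be traced by suitable paths, e.g.\ by continuously varying shuffle parameters along each edge. The interior can then be filled, for instance via the maps $s\mapsto (1-s)C+s\Pi$, which move points toward $\A$ roughly along rays. Since the hexagon is star-shaped with respect to $\A$, I expect continuous curves from $\A$ to each boundary point to cover it. This requires checking that along $(1-s)C+sM$ both measures scale exactly linearly, i.e.\ $\mu_\bullet((1-s)C+sM)=(1-s)\mu_\bullet(C)$, which holds because $M$ is both exchangeable and radially symmetric. That homogeneity makes the filling step straightforward once the boundary is realized.
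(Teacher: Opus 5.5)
Your proposal does not yet prove $\Omega_{R,T}\subseteq\A\B\C\D\E\F$. The framework you set up also misses the idea behind the non-convex part.

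\textbf{The triangle $\C\D\E$.} Excluding it amounts to the disjunction ``$\mu_T\le\tfrac12-\mu_R$ or $\mu_T\ge4\mu_R-1$'', so the proof needs a case split. The second alternative is a \emph{lower} bound on $\mu_T$, equivalently $\mu_R\le\tfrac{1+\mu_T}{4}$. It has to be extracted from a point where $\mu_R$ is attained, not from one where $\mu_T$ is attained followed by Fr\'echet-type estimates. The paper starts from a point $(u,v)$ where $\mu_R=r$ is attained. By Lemma~\ref{le:delta}(b) this point lies in some $\Delta_r^i$, and via $(u,v)\mapsto(1-u,1-v)$ one may take $i\in\{1,2\}$.
\begin{itemize}
\item If $(u,v)\in\Delta_r^2$ (near $(\tfrac13,\tfrac23)$), monotonicity of sections gives the witness $\mu_T\ge|C(1-v,1-u)-C(1-u,1-v)|\ge4r-1$ (Proposition~\ref{pr:spodnja}).
\item If $(u,v)\in\Delta_r^1$ (near $(\tfrac13,\tfrac13)$, where $d_R=\tfrac13$ as well), then up to replacing $C$ by $\widehat C$ we have $C(u,v)\ge r$ with $u+v\le1-r$. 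This bounds $C(y,x)$ from below region by region and gives $\mu_T\le\tfrac12-r$ (Proposition~\ref{pr:zgornja2}).
\end{itemize}
Your heuristic that mass concentrates near $(\tfrac13,\tfrac23)$ covers only one branch.

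\textbf{The edge $\E\F$.} This is not settled either. Your identity relates $A(u,v)$, where $A=C-C^T$, to $A(1-u,1-v)$, i.e.\ to \emph{minus} $A(1-v,1-u)$, not to $A(1-v,1-u)$ as you state. That sign is the point, since $(1-u,1-v)$ is close to $(v,u)$ when $u+v\approx1$. Even with the right sign, the two-sided error $2\mu_R$ is too costly. Closing with $|A(1-u,1-v)+A(u,v)|\le4(1-u-v)$ gives only $A(u,v)\le r+2(1-u-v)$, hence $\mu_T\le\tfrac{2+r}{7}$, which exceeds $\tfrac{1+r}{4}$ for every $r<\tfrac13$. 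The paper uses $\mu_R$ only once:
$C(u,v)-C(v,u)\le r+\bigl(u+v-1+C(1-u,1-v)-C(v,u)\bigr)\le r+1-u-v$, and then $\min\{u,v-u,r+1-u-v\}\le\tfrac{1+r}{4}$.

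\textbf{Realization.} The realization half also has a gap. Your homogeneity observation along $(1-s)C+sM$ is exactly Proposition~\ref{pr:radial}. However, scaling toward $\A$ fills the hexagon only if \emph{every} point of the edges $\B\C$, $\C\D$, $\D\E$, $\E\F$ is realized. Realizing the vertices and joining the realizing copulas by paths in $\CC$ does not give this. The image of such a path is just some curve inside the hexagon between two vertices, nothing keeps it on the edge, and the region between that curve and the edge stays uncovered.

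You need explicit one-parameter families on which both measures are computed exactly, which is most of Section~\ref{sec:realization}:
\begin{itemize}
\item a convex combination of two shuffles with $\mu_R\equiv\tfrac13$ for $\B\C$;
\item the shuffles $D_r=M(3,(r,1-r),(1,3,2),(1,1,-1))$ for $\C\D$ and $E_r$ for $\D\E$, with $r\in[\tfrac3{10},\tfrac13]$;
\item the shuffles $F_r$ for $\E\F$.
\end{itemize}
For $D_r$ and $F_r$ the exact value of $\mu_T$ comes from a witness point combined with the upper bounds of Propositions~\ref{pr:zgornja2} and~\ref{pr:zgornja1}, so the two halves of the proof interlock.

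Your proposal leaves $\C$, $\D$ and all non-vertex edge points unspecified. Your choices for $\E$ and $\F$ are fine, e.g.\ $M(3,(\tfrac13,\tfrac23),(2,3,1),(1,1,1))$ and the radially symmetric quarter shuffle with permutation $(2,4,1,3)$.
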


The hexagon $\A\B\C\D\E\F$ is depicted in Figure~\ref{fig:02}. 

\begin{figure}[h]
    \centering
    \includegraphics[width=0.4\linewidth]{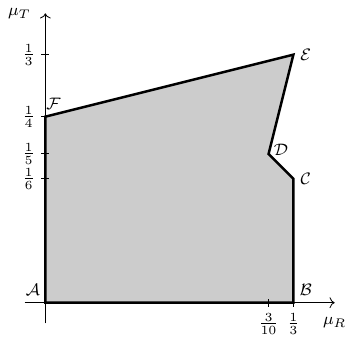}
    \caption{The set $\Omega_{R,T}$ of all possible pairs $(\mu_T(C),\mu_R(C))$.}
    \label{fig:02}
\end{figure}

In order to prove this theorem, we need to show that for any copula $C \in \CC$ the point $(\mu_R(C),\mu_T(C))$ lies in the hexagon $\A\B\C\D\E\F$, which will be done in Section~\ref{sec:bounds}, and that for any point $\PP \in \A\B\C\D\E\F$ there exists a copula $C \in \CC$ such that $(\mu_R(C),\mu_T(C)) = \PP$, which we will do in Section~\ref{sec:realization}.

Here, we start with some notation and two simple lemmas, which will be needed later. 

Let $r \in [0, \tfrac13]$. We define the following subsets of the unit square $\II^2$:
\begin{align*}
    \Delta_r^1 &= \{(u,v) \in \II^2 : u \ge r, v \ge r, u+v \le 1-r\}, \\
    \Delta_r^2 &= \{(u,v) \in \II^2 : u \ge r, v \le 1-r, v-u \ge r\}, \\
    \Delta_r^3 &= \{(u,v) \in \II^2 : u \le 1-r, v \le 1-r, u+v \ge 1+r\}, \\
    \Delta_r^4 &= \{(u,v) \in \II^2 : u \le 1-r, v \ge r, u-v \ge r\}.
\end{align*}
The sets $\Delta_r^1$, $\Delta_r^2$, $\Delta_r^3$, and $\Delta_r^4$ are triangles drawn in Figure~\ref{fig:03}. If $r > \tfrac14$, these triangles are pairwise disjoint.

\begin{figure}
    \centering
    \includegraphics[width=0.4\linewidth]{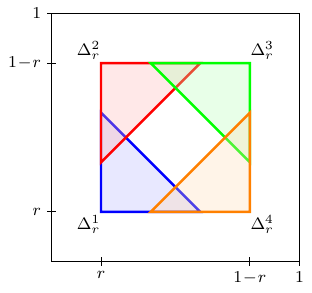}
    \hspace{1cm}
    \includegraphics[width=0.4\linewidth]{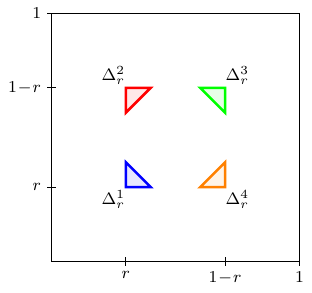}
    \caption{Triangles $\Delta_r^1$, $\Delta_r^2$, $\Delta_r^3$, and $\Delta_r^4$ for $r=\tfrac15$ (left) and $r=\tfrac{3}{10}$ (right).}
    \label{fig:03}
\end{figure}

\begin{lemma} \label{le:delta}
   \emph{(a)} Let $C \in \CC$ be a copula with $\mu_T(C) = t \in [0, \tfrac13]$. Suppose that  non-exchangeability $t$ is attained at the point $(u,v) \in \II^2$, i.e. $|C(u,v)-{C}^T(u,v)|=t$. Then $(u,v) \in \Delta_t^2 \cup \Delta_t^4$.
   
   \emph{(b)} Let $C \in \CC$ be a copula with $\mu_R(C) = r \in [0, \tfrac13]$. Suppose that radial asymmetry is attained at the point $(u,v) \in \II^2$, i.e. $|C(u,v)-\widehat{C}(u,v)|=r$. Then $(u,v) \in \Delta_r^1 \cup \Delta_r^2 \cup \Delta_r^3 \cup \Delta_r^4$.
\end{lemma}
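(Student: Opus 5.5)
The plan is to use Proposition~\ref{pr:d}: if the asymmetry is attained at $(u,v)$, then the maximal asymmetry function at that point must be at least the attained value. Then we translate the resulting inequalities into membership in the triangles.

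For part (a), let $t=\mu_T(C)=|C(u,v)-C^T(u,v)|$. By the definition \eqref{eq:dT} we have $t\le d_T(u,v)$. Proposition~\ref{pr:d} then gives
\[
t\le u,\quad t\le v,\quad t\le 1-u,\quad t\le 1-v,\quad t\le |u-v|.
\]
We split into two cases.
\begin{itemize}
\item If $v\ge u$, then $v-u\ge t$. Together with $u\ge t$ and $v\le 1-t$, this says $(u,v)\in\Delta_t^2$.
\item If $u\ge v$, then $u-v\ge t$. Together with $u\le 1-t$ and $v\ge t$, this says $(u,v)\in\Delta_t^4$.
\end{itemize}
The remaining inequalities are simply not needed. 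The degenerate case $t=0$ is harmless, since $\Delta_0^2\cup\Delta_0^4=\II^2$.

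For part (b), let $r=|C(u,v)-\widehat C(u,v)|$. By \eqref{eq:dR} and Proposition~\ref{pr:d},
\[
r\le\min\{u,v,1-u,1-v\}\quad\text{and}\quad r\le\max\{|u-v|,|u+v-1|\}.
\]
We distinguish which term achieves the maximum, and split each into its two sign cases.
\begin{itemize}
\item If $1-u-v\ge r$, combining with $u\ge r$ and $v\ge r$ gives $\Delta_r^1$.
\item If $u+v-1\ge r$, combining with $u\le 1-r$ and $v\le 1-r$ gives $\Delta_r^3$.
\item If $v-u\ge r$, the same argument as in (a) gives $\Delta_r^2$.
\item If $u-v\ge r$, the same argument as in (a) gives $\Delta_r^4$.
\end{itemize}

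No real obstacle is expected. The only point needing care is matching each case with the correct triangle's defining inequalities, since each triangle uses exactly three of the available bounds.
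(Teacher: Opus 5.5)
Your proposal is correct and essentially matches the paper's proof: you bound the attained value by $d_T(u,v)$ or $d_R(u,v)$ via Proposition~\ref{pr:d}, then split by the sign of $u-v$ (and, in (b), by which of $|u-v|$, $|u+v-1|$ is at least $r$) to place $(u,v)$ in the right triangle. Splitting (b) directly on the four inequalities $\pm(u-v)\ge r$, $\pm(u+v-1)\ge r$ is only a slightly more direct phrasing of the paper's four cases.
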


\begin{proof}
(a) Suppose that $|C(u,v)-{C}^T(u,v)|=t\in [0,\frac13]$. Then $d_T(u,v) \ge t$, so by Proposition \ref{pr:d} we have
$$u \ge t, \quad v \ge t, \quad u \le 1-t, \quad v \le 1-t, \quad \text{and} \quad |u-v| \ge t.$$  
Now, if $u\ge v$ then $u-v\ge t$ and $v\ge t$, $u\le 1-t$. So, $(u,v)\in\Delta_t^4$. If $u\le v$ then $v-u\ge t$ and $u\ge t$, $v\le 1-t$. So, $(u,v)\in\Delta_t^2$. 

(b) Suppose that $|C(u,v)-\widehat{C}(u,v)|=r$. Then $d_R(u,v) \ge r$, so by Proposition \ref{pr:d} we have
$$u \ge r, \quad v \ge r, \quad u \le 1-r, \quad v \le 1-r, \quad \text{and} \quad \max\{|u-v|, |u+v-1|\} \ge r.$$
We consider several cases.
If $|u-v| \ge |u+v-1|$ and $u-v \ge 0$, then 
$$\max\{|u-v|, |u+v-1|\} = u-v \ge r,$$
so $(u,v) \in \Delta_r^4$. 
If $|u-v| \ge |u+v-1|$ and $u-v \le 0$, then $v-u \ge r$, so $(u,v) \in \Delta_r^2$. 
Furthermore, if $|u-v| \le |u+v-1|$ and $u+v-1 \ge 0$, then 
$$\max\{|u-v|, |u+v-1|\} = u+v-1 \ge r,$$
thus $u+v \ge 1+r$ and $(u,v) \in \Delta_r^3$. 
Finally, if $|u-v| \le |u+v-1|$ and $u+v-1 \le 0$, then $u+v \le 1-r$, so $(u,v) \in \Delta_r^1$. 
\end{proof}

\begin{lemma} \label{le:simple}
   Let $u, \alpha, \beta \in \RR$. Then $\min\{u + \alpha, \beta - u\} \le \tfrac{\alpha+\beta}{2}$.
\end{lemma}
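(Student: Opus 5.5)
The inequality $\min\{u+\alpha,\beta-u\}\le\tfrac{\alpha+\beta}{2}$ follows from the elementary fact that a minimum of two real numbers never exceeds their average. The plan is to apply this to the two numbers $u+\alpha$ and $\beta-u$. The key point is that their sum $(u+\alpha)+(\beta-u)=\alpha+\beta$ does not depend on $u$.

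Concretely, set $a=u+\alpha$ and $b=\beta-u$. Then
\[
\min\{a,b\}\le \frac{a+b}{2},
\]
since $\min\{a,b\}\le a$ and $\min\{a,b\}\le b$; adding these two inequalities and dividing by $2$ gives the claim. Substituting $a+b=\alpha+\beta$ yields the stated bound.

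Alternatively, one can split into two cases. If $u\le\tfrac{\beta-\alpha}{2}$, then $u+\alpha\le\tfrac{\alpha+\beta}{2}$ directly. Otherwise $u>\tfrac{\beta-\alpha}{2}$, so $\beta-u<\tfrac{\alpha+\beta}{2}$. In either case the minimum is at most $\tfrac{\alpha+\beta}{2}$.

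No step here is a genuine obstacle; the lemma is a one-line averaging argument. Its role later is presumably to bound quantities such as $\min\{u+\alpha,\beta-u\}$ arising from Fréchet-type estimates on the triangles $\Delta_r^i$, $\Delta_t^j$ of Lemma~\ref{le:delta}, uniformly in the free parameter $u$.
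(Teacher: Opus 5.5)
Your proof is correct. Your alternative two-case argument is the paper's own proof: split at $u=\tfrac{\beta-\alpha}{2}$ and bound whichever of the two terms is the smaller one.

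Your primary argument is a slightly different and cleaner route. It uses $\min\{a,b\}\le\tfrac{a+b}{2}$ for arbitrary reals, together with the fact that $(u+\alpha)+(\beta-u)=\alpha+\beta$ does not depend on $u$, so no case analysis is needed. This version makes it transparent why the bound is uniform in $u$. It also extends at once to weighted averages $\min\{a,b\}\le\lambda a+(1-\lambda)b$, which would handle terms whose coefficients of $u$ are not $\pm1$.

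The paper's case split instead exhibits the crossing point $u=\tfrac{\beta-\alpha}{2}$ explicitly. At that point both terms equal $\tfrac{\alpha+\beta}{2}$, so the bound is attained. Either argument fully suffices for every later use of the lemma.
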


\begin{proof}
If $u \le \tfrac{\beta-\alpha}{2}$ then $u + \alpha \le \beta - u$, so
$$\min\{u + \alpha, \beta - u\} = u + \alpha \le \tfrac{\beta-\alpha}{2} + \alpha = \tfrac{\alpha+\beta}{2}.$$
If $u \ge \tfrac{\beta-\alpha}{2}$ then $u + \alpha \ge \beta - u$, so
$$\min\{u + \alpha, \beta - u\} = \beta - u \le \beta - \tfrac{\beta-\alpha}{2} = \tfrac{\alpha+\beta}{2},$$
which finishes the proof.
\end{proof}

\section{The bounds} \label{sec:bounds}

In this section we prove that for any copula $C \in \CC$ the point $(\mu_R(C),\mu_T(C))$ lies in the hexagon $\A\B\C\D\E\F$. We first show that the point $(\mu_R(C),\mu_T(C))$ always lies on or below the edge $\E\F$, i.e. on or below the line $t = \tfrac14 r + \tfrac14$, containing points $\E(\tfrac13,\tfrac13)$ and $\F(0,\tfrac14)$.

\begin{proposition} \label{pr:zgornja1}
   For any copula $C \in \CC$ we have 
   $$\mu_T(C) \le \tfrac14 \mu_R(C) + \tfrac14.$$
\end{proposition}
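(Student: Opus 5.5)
We bound $t=\mu_T(C)$ at a point where it is attained by four elementary inequalities, one of which involves $r=\mu_R(C)$; a suitable weighted sum of them gives exactly $4t\le r+1$. We may assume $t>0$.

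\emph{Reduction.} By compactness, $t$ is attained at some $(u,v)$. Passing to $C^T$ if necessary, we may assume $C(v,u)-C(u,v)=t$ with $u<v$. This is allowed because $\mu_T(C^T)=\mu_T(C)$, and $\widehat{C^T}=(\widehat C)^T$ gives $\mu_R(C^T)=\mu_R(C)$. By Lemma~\ref{le:delta}(a), or directly from Proposition~\ref{pr:d}, we then have
\[
t\le u,\qquad t\le 1-v,\qquad t\le v-u .
\]
We also use two standard facts about copulas. They are Lipschitz, $|C(a,b)-C(c,d)|\le|a-c|+|b-d|$. They are nondecreasing in each variable, so $0\le C(a,b)-C(c,d)\le (a-c)+(b-d)$ whenever $a\ge c$ and $b\ge d$.

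\emph{The inequality involving $r$.} We insert a survival copula value between $C(v,u)$ and $C(u,v)$, and split according to the sign of $u+v-1$.

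\emph{Case $u+v\le 1$.} Write
\[
t=\bigl[C(v,u)-\widehat C(v,u)\bigr]+\bigl[\widehat C(v,u)-C(u,v)\bigr].
\]
The first bracket is at most $r$. Since $\widehat C(v,u)=u+v-1+C(1-v,1-u)$, the second bracket equals $u+v-1+C(1-v,1-u)-C(u,v)$. Here $1-v\ge u$ and $1-u\ge v$, so monotonicity gives $C(1-v,1-u)-C(u,v)\le 2(1-u-v)$. Hence
\[
t\le r+1-u-v .
\]
Adding this to $2t\le 2u$ and $t\le v-u$ gives $4t\le r+1$.

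\emph{Case $u+v\ge 1$.} Write
\[
t=\bigl[C(v,u)-\widehat C(u,v)\bigr]+\bigl[\widehat C(u,v)-C(u,v)\bigr].
\]
The second bracket is at most $r$. The first bracket equals $C(v,u)-C(1-u,1-v)-(u+v-1)$. Now $v\ge 1-u$ and $u\ge 1-v$, so monotonicity bounds $C(v,u)-C(1-u,1-v)$ by $2(u+v-1)$. Hence
\[
t\le r+u+v-1 .
\]
Adding this to $2t\le 2(1-v)$ and $t\le v-u$ again gives $4t\le r+1$.

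The main obstacle is finding the right intermediate survival value in each case. It has to be chosen so that the monotonicity estimate applies, which means the two arguments must be coordinatewise comparable. That is exactly what forces the case split on the sign of $u+v-1$. Once this is arranged, the weights $(1,2,1)$ in each case are forced by requiring $u$ and $v$ to cancel.
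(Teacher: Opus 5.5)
Your proof is correct and follows the paper's argument. The key estimate $t\le r+1-u-v$ (resp.\ $t\le r+u+v-1$), obtained by inserting a survival-copula value and bounding the remaining difference of $C$-values by the Lipschitz property, is exactly what the paper derives in its Cases V--VIII, and your $(2,1,1)$ weighted sum is the same averaging the paper does through two applications of Lemma~\ref{le:simple}. The only difference is organizational: you use the transposition invariance of $\mu_T$ and $\mu_R$ and note that the $r$-estimate holds on the whole half-square $u+v\le 1$ (resp.\ $u+v\ge 1$), which collapses the paper's eight cases to two and makes its separate $d_T$-only treatment of Cases I--IV unnecessary.
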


\begin{proof}
    We assume that $C\in\CC$ and denote $r=\mu_R(C)\in [0,\tfrac13]$. Then $|C(u,v)-\widehat{C}(u,v)|\le r$ for any $(u,v)\in\II^2$, and hence $C(u,v)-\left(u+v-1+C(1-u,1-v)\right)\le r$ or
    \begin{equation}\label{one_star}
        C(u,v)\leq C(1-u,1-v)+r+u+v-1.
    \end{equation}

\begin{figure}[h]
    \centering
    \includegraphics[width=0.35\linewidth]{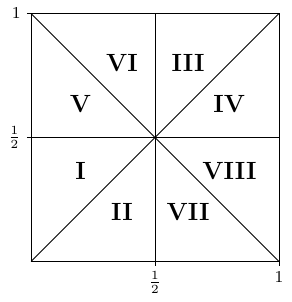}
    \caption{Regions considered in the proof of Proposition \ref{pr:zgornja1}.}
    \label{fig:04}
\end{figure}

    To prove the required inequality, we consider several cases (see Figure~\ref{fig:04}).

    \textbf{Case I.} Suppose first that $u,v\le\frac12$ and $u\leq v$. Then we have
    $$
    C(u,v)-C(v,u)\leq d_T(u,v) \leq \min\{u,v-u\}\leq\tfrac{v}{2}\leq\tfrac14\leq\tfrac{r+1}{4}.
    $$
    Here, the second inequality follows by Proposition \ref{pr:d}, the third inequality by Lemma \ref{le:simple}, the fourth inequality holds since $v\leq\frac12$, and the last one holds since $r\ge 0$.

    \textbf{Case II.} If $u,v\le\frac12$ and $u\geq v$ then, using analogous arguments as in the previous case, we have
    $$
    C(u,v)-C(v,u)\leq \min\{v,u-v\}\leq\tfrac{u}{2}\leq\tfrac14\leq\tfrac{r+1}{4}.
    $$

    \textbf{Case III.} Suppose next that $u,v\ge\frac12$ and $u\leq v$. Then we have
    $$
    C(u,v)-C(v,u)\leq d_T(u,v)\leq \min\{1-v,v-u\}\leq\tfrac{1-u}{2}\leq\tfrac14\leq\tfrac{r+1}{4},
    $$
    where the second inequality follows by Proposition \ref{pr:d}, the third inequality by Lemma \ref{le:simple}, the fourth inequality holds since $u\geq\frac12$, and the last one holds since $r\ge 0$.

    \textbf{Case IV.} If $u,v\ge\frac12$ and $u\geq v$ then, using analogous arguments as in the previous case, we have
    $$
    C(u,v)-C(v,u)\leq \min\{1-u,u-v\}\leq\tfrac{1-v}{2}\leq\tfrac14\leq\tfrac{r+1}{4}.
    $$

    \textbf{Case V.} Suppose now that $u\le\frac12$, $v\ge\frac12$ and $u+v\leq 1$. Then, using \eqref{one_star}, we have
    $$
    C(u,v)-C(v,u)\leq C(1-u,1-v)+r+u+v-1-C(v,u).
    $$
    The $1$-Lipschitz property of $C$ implies
    $$
    C(1-u,1-v)-C(v,u) \leq |(1-u)-v|+|(1-v)-u| = 2(1-u-v),
    $$
    since $u+v\leq 1$. 
    Combining both inequalities we obtain
    \begin{equation}\label{two-star}
        C(u,v)-C(v,u)\leq r+1-u-v.
    \end{equation}
    Finally, we have
    $$
    C(u,v)-C(v,u)\leq\min\{u,v-u,r+1-u-v\}\leq \min\big\{u,\tfrac{r+1-2u}{2}\big\}
    \leq\tfrac{r+1}{4},
    $$
    where the first inequality follows by Proposition \ref{pr:d} and inequality \eqref{two-star}, while the second and the third inequalities follow by Lemma \ref{le:simple}.

    \textbf{Case VI.} If $u\le\frac12$, $v\ge\frac12$ and $u+v\geq 1$ then analogous arguments as in the previous case imply that
    $$
     C(u,v)-C(v,u)\leq r+u+v-1,
    $$
    and also
    $$
    C(u,v)-C(v,u)\leq\min\{1-v,v-u,r+u+v-1\}\leq\min\big\{1-v,\tfrac{2v+r-1}{2}\big\}\leq\tfrac{r+1}{4}.
    $$

    \textbf{Case VII.} Assume that $u\geq\frac12$, $v\leq\frac12$ and $u+v\leq 1$. Then the same argument as in the Case V implies that
    $$
    C(u,v)-C(v,u)\leq r+1-u-v.
    $$
    Combing this inequality with Proposition \ref{pr:d} and Lemma \ref{le:simple} we obtain
    $$
    C(u,v)-C(v,u)\leq \min\{v,u-v,r+1-u-v\}\leq\min\big\{v,\tfrac{r+1-2v}{2}\big\}\leq\tfrac{r+1}{4}.
    $$

    \textbf{Case VIII.} In the final case $u\geq\frac12$, $v\leq\frac12$ and $u+v\geq 1$ we have 
    $$
    C(u,v)-C(v,u)\leq r+u+v-1
    $$
    using the same argument as in Case VI. Combing this inequality with Proposition \ref{pr:d} and Lemma \ref{le:simple} we have
    $$
    C(u,v)-C(v,u)\leq \min\{1-u,u-v,r+u+v-1\}\leq\min\big\{1-u,\tfrac{r-1+2u}{2}\big\}\leq\tfrac{r+1}{4}.
    $$

    We have proved that $C(u,v)-C(v,u)\leq \tfrac{r+1}{4}$ holds for any $(u,v)\in\II^2$. Now,
    \begin{align*}
    \mu_T(C) &= \max\big\{|C(u,v)-C(v,u)| \colon u,v\in \II\big\} \\
    &= \max\big\{C(u,v)-C(v,u) \colon u,v\in \II\big\} \\
    &\leq \tfrac{r+1}{4} = \tfrac14 \mu_R(C) +  \tfrac14,      
    \end{align*}
    which finishes the proof.
\end{proof}

Next we show that the points in the interior of the triangle $\C\D\E$ are not attained by some copula. In order to do this we assume that $\mu_R(C) \ge \tfrac14$ and consider two cases, where radial asymmetry $\mu_R$ is attained.

\begin{proposition} \label{pr:spodnja}
   Suppose that for some copula $C \in \CC$ it holds that $\mu_R(C) = r \in [\tfrac14, \tfrac13]$ and that radial asymmetry $\mu_R$ is attained at a point $(u,v) \in \Delta_r^2$. Then $$\mu_T(C) \ge 4 \mu_R(C) - 1.$$
\end{proposition}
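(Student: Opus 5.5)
The plan is to reduce the claim to pointwise inequalities at the point $(u,v)\in\Delta_r^2$ where $|C(u,v)-\widehat C(u,v)|=r$, and to use only three tools: the Fréchet--Hoeffding bounds $W\le C\le M$, the $1$-Lipschitz property of $C$, and the definition of $t=\mu_T(C)$, which gives $|C(a,b)-C(b,a)|\le t$ for all $(a,b)$. Write $\widehat C(u,v)=u+v-1+C(1-u,1-v)$ and set $s=u+v-1$. The hypothesis $(u,v)\in\Delta_r^2$ gives $u\ge r$, $v\le 1-r$ and $v\ge u+r$. I will split into cases by the sign of $C(u,v)-\widehat C(u,v)$ and by the sign of $s$, and aim in each case for two inequalities:
\[
r\le t+(\text{something small in } s)\qquad\text{and}\qquad (\text{that quantity})\le 1-3r .
\]
Adding them then yields $t\ge 4r-1$.

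\textbf{Model case: $C(u,v)-\widehat C(u,v)=r$ and $s\ge 0$.} Here $C(u,v)-C(1-u,1-v)=r+s$.
\begin{itemize}
\item \emph{Lipschitz step.} Insert the transposed point: $C(1-u,1-v)\ge C(1-v,1-u)-t$. The Lipschitz property between $(u,v)$ and $(1-v,1-u)$ gives $C(u,v)-C(1-v,1-u)\le 2|s|$. Together these give $r+s\le t+2s$, that is, $r\le t+s$.
\item \emph{Fréchet step.} Use $C(u,v)\le\min\{u,v\}$ and $C(1-u,1-v)\ge W(1-u,1-v)=\max\{0,-s\}$. This gives $r+s\le u$, so $r\le u-s=1-v$.
\item \emph{Using $\Delta_r^2$.} From $2v\ge (u+v)+r=1+s+r$ we get $1-v\le\frac{1-s-r}{2}$. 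Hence $r\le\frac{1-s-r}{2}$, i.e. $s\le 1-3r$.
\end{itemize}
Combining, $t\ge r-s\ge 4r-1$. This case already shows the mechanism.

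\textbf{Remaining cases.} I will treat the other sign, $\widehat C(u,v)-C(u,v)=r$, and the case $s\le 0$ in the same way. The choice of which transposed point to insert, $(v,u)$ or $(1-v,1-u)$, and which Fréchet bound to use on each term will be adapted so that the Lipschitz error stays linear in $|s|$ with coefficient $1$ after cancellation. Where the roles of $u$ and $1-v$ are interchanged, I expect the symmetry $(u,v)\mapsto(1-v,1-u)$ to map the sets to each other: it maps $\Delta_r^2$ onto itself and preserves both $\mu_R$ and $\mu_T$. This should reduce $s\le0$ to $s\ge0$, possibly after also passing to $\widehat C$ to swap the sign. Concretely, the copula $\widehat C^{\,T}$ has the same $\mu_R$ and $\mu_T$ as $C$. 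I will check that its radial asymmetry is attained at $(1-v,1-u)$, which lies in $\Delta_r^2$ and has $s$ replaced by $-s$.

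\textbf{Main obstacle.} The obstacle is the case $s<0$ if the above symmetry reduction fails to carry over cleanly. A naive Lipschitz estimate there gives only $r\le t+3|s|$, which yields the weaker bound $t\ge 10r-3$. So the key step I will verify first is that the map $C\mapsto\widehat C^{\,T}$ (or $\widehat C$) indeed transports the extremal point within $\Delta_r^2$ while flipping the sign of $s$ and/or of $C-\widehat C$. If it does, the model case suffices. If not, I will instead bound $C(1-u,1-v)$ from below via monotonicity, $1-u\ge v$, combined with transposition at $(v,u)$, to get the sharper $r\le t+|s|$.
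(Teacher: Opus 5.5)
Your proposal is correct and its core is the paper's proof, with one case handled by a different device.

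\textbf{Model case.} This case ($C(u,v)-\widehat{C}(u,v)=r$, $u+v\ge 1$) is the paper's second case. Your Lipschitz inequality $C(u,v)-C(1-v,1-u)\le 2(u+v-1)$ is the paper's monotonicity step $\widehat{C}(1-u,1-v)\le\widehat{C}(v,u)$ rewritten, and both end at $\mu_T(C)\ge r+1-u-v$. The sign of $C-\widehat{C}$ is handled, as in the paper, by passing to $\widehat{C}$. Your Fr\'echet step is superfluous: $v\le 1-r$ is part of the definition of $\Delta_r^2$, so $u+v-1\le 2v-r-1\le 1-3r$ directly.

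\textbf{The case $u+v\le 1$.} Here you differ. The paper argues directly: $C(u,v)\le C(1-v,v)\le C(1-v,1-u)$ gives $\mu_T(C)\ge C(1-v,1-u)-C(1-u,1-v)\ge u+v-1+r\ge 2u+2r-1\ge 4r-1$. This is essentially your fallback.

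Your primary route, the anti-diagonal reflection $C\mapsto\widehat{C}^{T}$, does work. Since $\widehat{\widehat{C}^{T}}=C^{T}$, both $\mu_R$ and $\mu_T$ are preserved. Moreover
\[
\widehat{C}^{T}(1-v,1-u)-C^{T}(1-v,1-u)=\widehat{C}(1-u,1-v)-C(1-u,1-v)=C(u,v)-\widehat{C}(u,v).
\]
So the extremal point moves to $(1-v,1-u)\in\Delta_r^2$ with the same sign and with $u+v-1$ replaced by $1-u-v$. For the opposite sign, compose with $C\mapsto\widehat{C}$, which amounts to using $C^{T}$.

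\textbf{Comparison.} Your reduction needs only one computation and shows that the paper's two monotonicity chains are mirror images under this reflection. The paper's direct chain is shorter to write and avoids checking how the transformations act on $\mu_R$, $\mu_T$ and $\Delta_r^2$.
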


\begin{proof}
   By assumptions of the proposition we have $|C(u,v)-\widehat{C}(u,v)| = r$ for some point $(u,v) \in \Delta_r^2$. Assume first that $C(u,v)-\widehat{C}(u,v) \ge 0$. Then 
   $$ C(u,v) = \widehat{C}(u,v) + r = C(1-u,1-v) + u + v - 1 + r.$$
   
   We consider two cases. Suppose that $u+v \le 1$. Then 
   $$ C(u,v) \le C(1-v,v) \le C(1-v, 1-u),$$
   since vertical and horizontal sections of a copula are non-decreasing. Now,
   \begin{align*}
      \mu_T(C) &\ge C(1-v, 1-u) - C(1-u,1-v) \\
      &\ge C(u,v) - C(1-u,1-v) \\
      &= C(1-u,1-v) + u + v - 1 + r - C(1-u,1-v) \\
      &= u + v - 1 + r \\
      &\ge 2u + 2r - 1  \\
      &\ge 4r - 1, 
   \end{align*}
   since $(u,v) \in \Delta_r^2$, so that $v \ge u+r$ and $u \ge r$. 

   Suppose now that $u+v \ge 1$. Then 
   $$ \widehat{C}(1-u,1-v) \le \widehat{C}(v,1-v) \le \widehat{C}(v,u)$$
   and 
   \begin{align*}
      \mu_T(C) &\ge C(1-v, 1-u) - C(1-u,1-v) \\ 
      &= \widehat{C}(v,u) + 1 - v - u - C(1-u,1-v) \\
      &\ge \widehat{C}(1-u,1-v) + 1 - v - u - C(1-u,1-v) \\
      &= C(u,v) + 2 - 2u - 2v - C(1-u,1-v) \\
      &= C(1-u,1-v) + u + v - 1 + r - C(1-u,1-v) \\
      &= r + 1 - u - v  \\
      &\ge 4r - 1, 
   \end{align*}
    since $(u,v) \in \Delta_r^2$, so that $-u \ge r - v$ and $-v \ge r - 1$.  
    In both cases we have proved that  $\mu_T(C) \ge 4r-1 = 4 \mu_R(C) - 1.$

   In the case that $C(u,v)-\widehat{C}(u,v) \le 0$ we have $\widehat{C}(u,v) - C(u,v) \ge 0$. We interchange the roles of $C$ and $\widehat{C}$ to deduce 
   $\mu_T(\widehat{C}) \ge 4r-1$ in the same way. Now, 
   \begin{align*}
      \mu_T(\widehat{C}) &= \max\big\{|\widehat{C}(u,v)-\widehat{C}(v,u)| \colon u,v\in \II\big\} \\
      &= \max\big\{|u+v-1+C(1-u,1-v)-(v+u-1+C(1-v,1-u))| \colon u,v\in \II\big\} \\
      &= \max\big\{|C(1-u,1-v)-C(1-v,1-u))| \colon u,v\in \II\big\} \\
      &= \max\big\{|C(u,v)-C(v,u))| \colon u,v\in \II\big\} = \mu_T(C),
   \end{align*}
   so again $\mu_T(C) \ge 4r-1 = 4 \mu_R(C) - 1$, which finishes the proof.
\end{proof}

\begin{proposition} \label{pr:zgornja2}
   Suppose that for some copula $C \in \CC$ it holds that $\mu_R(C) = r \in [\tfrac14, \tfrac13]$ and that radial asymmetry $\mu_R$ is attained at a point in $\Delta_r^1$. Then $$\mu_T(C) \le \tfrac12 - \mu_R(C).$$
\end{proposition}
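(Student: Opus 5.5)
By the computation at the end of the proof of Proposition~\ref{pr:spodnja}, $\mu_T(\widehat C)=\mu_T(C)$. Also $\widehat{\widehat C}=C$, so $\mu_R(\widehat C)=\mu_R(C)$ and the maximizing point is the same. We may therefore replace $C$ by $\widehat C$ when needed and assume $C(u,v)-\widehat C(u,v)=r$ at a point $(u,v)\in\Delta_r^1$. Since $\widehat C(u,v)\ge 0$, this gives $C(u,v)\ge r$. Moreover $u,v\ge r$ and $u+v\le 1-r$, so $u,v\le 1-2r$. In mass language, the rectangle $[0,u]\times[0,v]$ carries mass at least $r$. Hence the strip $[0,u]\times[v,1]$ carries mass at most $u-r$, and the strip $[u,1]\times[0,v]$ carries mass at most $v-r$. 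These are the only facts about $C$ I intend to use, together with Proposition~\ref{pr:d}.

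Next I fix $(x,y)$ and bound $D=C(x,y)-C(y,x)$ from above. Bounding $D$ for all $(x,y)$ also bounds $|D|$, since $(x,y)$ and $(y,x)$ both range over $\II^2$. By Lemma~\ref{le:delta}(a) I may restrict to points where $\mu_T$ is attained, and I treat $x<y$ and $x>y$ separately; they are symmetric after swapping the roles of the two strips above. For $x<y$ I write $D$ in two ways as a difference of rectangle masses:
\[
D=m([0,x]\times[x,y])-m([x,y]\times[0,x]) = m([x,y]\times[y,1])-m([y,1]\times[x,y]).
\]
Hence $D\le m([0,x]\times[x,y])$ and $D\le m([x,y]\times[y,1])$. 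The aim in each case is a bound of the form $\min\{\ell_1(x),\ell_2(x)\}$ with two affine pieces of opposite slope, so that Lemma~\ref{le:simple} turns it into a constant at most $\tfrac12(u+v-r)\le\tfrac12(1-2r)=\tfrac12-r$.

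The model case is $x\le u$ and $y\ge v$. Split $[0,x]\times[x,y]$ into the part below height $v$ and the part above it. The lower part has mass at most $v-x$, since it lies in the horizontal strip $[0,1]\times[x,v]$. The upper part lies in $[0,u]\times[v,1]$, so it has mass at most $u-r$. Thus $D\le\min\{x,\,u+v-r-x\}\le \tfrac{u+v-r}{2}\le\tfrac12-r$. The remaining configurations of $x,y$ relative to $u,v$ are handled the same way. When $y\le v$ the first representation already gives $D\le y-x$, to be combined with $D\le x$ and with $y\le v\le 1-2r$. When $x\ge u$ I use the second representation together with $d_T(x,y)\le 1-y$ and the strip $[u,1]\times[0,v]$. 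Throughout, $r\ge\tfrac14$ is used through $1-2r\le 2r$, which makes the crude bounds from Proposition~\ref{pr:d} (e.g.\ $D\le\tfrac{y}{2}$) already at most $\tfrac12-r$ when both $x,y$ are small or both are large.

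I expect the main obstacle to be the bookkeeping. One must check that every position of $(x,y)$ relative to the four lines $x=u$, $x=v$, $y=u$, $y=v$ is covered by a pair of affine bounds whose average is at most $\tfrac12-r$; in particular the mixed case $u\le x<v\le y$ needs both strips simultaneously. If a case resists, I would first try the alternative representation of $D$ through the complementary rectangle.
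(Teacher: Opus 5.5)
\paragraph{Verdict.} There is a genuine gap, and it is not a matter of bookkeeping: the facts you choose to keep are too weak to imply the conclusion.

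\paragraph{The gap.} You reduce the hypothesis to $C(u,v)\ge r$ at some $(u,v)\in\Delta_r^1$, plus the two strip bounds that follow from it. You then state that nothing else about $C$ will be used. That information does not suffice, as the following example shows.
\begin{itemize}
\item Fix $r\in(\tfrac14,\tfrac13]$ and take a copula $C'$ with $\mu_T(C')=\tfrac13$, e.g.\ the shuffle $M(3,(\tfrac13,\tfrac23),(2,3,1),(1,1,1))$.
\item Let $C$ be the ordinal sum of $M$ on $[0,r]^2$ and $C'$ on $[r,1]^2$. That is, $C=M$ outside $[r,1]^2$ and $C(x,y)=r+(1-r)C'\bigl(\tfrac{x-r}{1-r},\tfrac{y-r}{1-r}\bigr)$ on $[r,1]^2$.
\item Then $(r,r)\in\Delta_r^1$ and $C(r,r)=r$, so every fact on your list holds.
\item Yet $\mu_T(C)=\tfrac{1-r}{3}>\tfrac12-r$, attained at $x=r+\tfrac{1-r}{3}$, $y=r+\tfrac{2(1-r)}{3}$.
\end{itemize}
This does not contradict the proposition, since here $C(r,r)-\widehat C(r,r)=\tfrac{1-r}{3}<r$.

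The bad point has $x\in[v,1-u]$ and $y\ge 1-v$. There your strip $[0,u]\times[v,1]$ gives only $D\le x-r$, and $\min\{x-r,\,y-x,\,1-y\}=\tfrac{1-r}{3}$ at that point. The rectangle $[x,y]\times[y,1]$ does not even meet the strip $[u,1]\times[0,v]$. So your plan for $x\ge u$ (second representation, $D\le 1-y$, that strip) cannot close in this region.

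\paragraph{The missing idea.} What you discarded is the other half of the radial identity. With $s=\widehat C(u,v)\ge 0$ one has $C(u,v)=r+s$ and, equivalently, $C(1-u,1-v)=1-u-v+s$. In mass terms, the corner $[1-u,1]\times[1-v,1]$ carries mass exactly $s$, so $[0,u]\times[0,v]$ outweighs the opposite corner by exactly $r$.

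The paper keeps both facts and lets $s$ cancel. For $x\in[v,1-u]$, $y\in[1-v,1]$ it bounds
\begin{itemize}
\item $C(x,y)\le C(1-u,y)\le C(1-u,1-v)+y-(1-v)=s+y-u$, and
\item $C(y,x)\ge C(u,v)=r+s$.
\end{itemize}
Hence $D\le y-u-r$. Together with $D\le 1-y$, Lemma~\ref{le:simple} gives $D\le\tfrac{1-u-r}{2}\le\tfrac12-r$. The transposed region $[1-u,1]\times[u,1-v]$ is handled the same way.

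The paper's other cases need only $C(u,v)\ge r$, the $1$-Lipschitz property, Proposition~\ref{pr:d} and $r\ge\tfrac14$. So your mass-based treatment can stand there once this corner information is added.

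Your preliminary reduction to the sign $C(u,v)-\widehat C(u,v)=+r$, by passing to $\widehat C$, is correct. It in fact supplies a step that the paper's proof takes for granted.
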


\begin{proof}
    Suppose that $\mu_R(C)=r$ is attained at a point $(u_0,v_0)\in\Delta_r^1$. Hence, we have that $r=C(u_0,v_0)-\widehat{C}(u_0,v_0)$. We write $s=C(u_0,v_0)-r$.  So,
    $$
    s=\widehat{C}(u_0,v_0)\geq 0.
    $$
    
    We would like to show that $C(u,v)-C(v,u)\leq \frac12-r$ for all $(u,v)\in\II^2$. To do so, we consider several cases (see Figure~\ref{fig:05}).

    \textbf{Case I.} Assume first that $u\leq v_0$ and $v\leq u_0$. Then the $1$-Lipshitz property implies that 
    $$
    C(u_0,v_0)-C(v,u)\leq u_0-v+v_0-u,
    $$
    and hence
    \begin{equation*}\label{3-star}
        C(v,u)\geq C(u_0,v_0)+u+v-u_0-v_0=r+s+u+v-u_0-v_0.
    \end{equation*}
    Since $C(u,v)\leq u$, $s\geq 0$ and $u_0+v_0\leq 1-r$, this inequality implies that
    $$
    C(u,v)-C(v,u)\leq u_0+v_0-r-s-v\leq u_0+v_0-r-v\leq 1-2r-v.
    $$
    Combing the latter inequality with Proposition \ref{pr:d} and Lemma \ref{le:simple}, respectively, we obtain
    $$
    C(u,v)-C(v,u)\leq \min\{v,1-2r-v\}\leq \tfrac12-r.
    $$

\begin{figure}[h]
    \centering
    \includegraphics[width=0.4\linewidth]{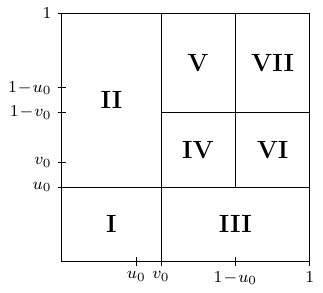}
    \caption{Regions considered in the proof of Proposition \ref{pr:zgornja2} for $u_0=\tfrac{3}{10}$, $v_0=\tfrac25$.}
    \label{fig:05}
\end{figure}

    \textbf{Case II.} Suppose that $u\leq v_0$ and $v\geq u_0$. From these assumptions and the $1$-Lipschitz property it follows that 
    $$
    C(v,u)\geq C(v,v_0)-(v_0-u)\geq C(u_0,v_0)+u-v_0=r+s+u-v_0.
    $$
    Since $C(u,v)\leq u$, $s\geq 0$ and $v_0\leq1-r-u_0\leq1-2r\leq\frac{1}{2}$, the above inequality implies that
    $$
    C(u,v)-C(v,u)\leq v_0-r-s\leq v_0-r\leq \tfrac12-r.
    $$

    \textbf{Case III.} Next, we consider the case $u\geq v_0$ and $v\leq u_0$. The $1$-Lipschitz property implies that $C(u_0,u)-C(v,u)\leq u_0-v$. Hence, it follows that 
    $$
    C(v,u)\geq C(u_0,u)-u_0+v\geq C(u_0,v_0)-u_0+v = r+s-u_0+v.
    $$
    Since $C(u,v)\leq v$, $s\geq 0$ and $u_0\leq\frac{1}{2}$ (as above) we obtain
    $$
    C(u,v)-C(v,u)\leq u_0-r-s\leq u_0-r\leq \tfrac12-r.
    $$

    \textbf{Case IV.} Now, we choose $(u,v)\in [v_0,1-u_0]\times [u_0,1-v_0]$. Observe that then $C(v,u)\geq C(u_0,v_0)=r+s$. 

    Assume first that $u\leq v$. Then $C(u,v)\leq u$ and Proposition \ref{pr:d} imply that
    $$
    C(u,v)-C(v,u)\leq u-r-s
    $$
    and
    $$
    C(u,v)-C(v,u)\leq v-u\leq 1-v_0-u,
    $$
    respectively. Combing these two inequalities with Lemma \ref{le:simple} and using that $s\ge 0$, $v_0\ge r$, we obtain
    $$
    C(u,v)-C(v,u)\leq \min\{u-r-s,1-v_0-u\}\leq \tfrac{1-v_0-r-s}{2}\leq \tfrac{1-v_0-r}{2}\leq \tfrac12-r.
    $$

    If $u\geq v$ then using analogous arguments as above we get first that
    $$
    C(u,v)-C(v,u)\leq v-r-s
    $$
    and
    $$
    C(u,v)-C(v,u)\leq u-v \leq 1-u_0-v.
    $$
    Then, combing these two inequalities with Lemma \ref{le:simple} and using that $s\ge 0$, $u_0\ge r$, we obtain
    $$
    C(u,v)-C(v,u)\leq \min\{v-r-s,1-u_0-v\}\leq \tfrac{1-u_0-r-s}{2}\leq \tfrac{1-u_0-r}{2}\leq \tfrac12-r.
    $$
    
    \textbf{Case V.} Assume that $(u,v)\in [v_0,1-u_0]\times [1-v_0,1]$. Then our assumptions and the $1$-Lipschitz property imply that
    $$
    C(u,v)\leq C(1-u_0,v)\leq C(1-u_0,1-v_0) +v-(1-v_0)=s+v-u_0,
    $$
    since $C(1-u_0,1-v_0) = s + 1 -u_0 -v_0$, and
    $$
    C(v,u)\geq C(u_0,u)\geq C(u_0,v_0)=r+s.
    $$
    It follows that 
    $$
    C(u,v) - C(v,u) \leq v-u_0 - r.
    $$
    Furthermore, since $C(u,v)\leq u$ and $C(v,u)\geq W(v,u)\geq u+v-1$, we have
    $$
    C(u,v) - C(v,u) \leq 1-v.
    $$
    Using Lemma \ref{le:simple} this implies 
    $$
    C(u,v) - C(v,u) \leq \min\{v-u_0-r,1-v\} \leq \tfrac{1-r-u_0}{2}\leq \tfrac12-r.
    $$
    
    \textbf{Case VI.} Now we take $(u,v)\in [1-u_0,1]\times [u_0,1-v_0]$. Then $C(u,v)\le v$ and $C(v,u)\ge u+v-1$ imply $C(u,v) - C(v,u) \leq 1-u$. Furthermore,
    $$
    C(u,v)\leq C(u,1-v_0)\leq C(1-u_0,1-v_0)+u-1+u_0=s+u-v_0
    $$
    and
    $$
    C(v,u)\ge C(u_0,v_0)=r+s,
    $$
    so $C(u,v) - C(v,u) \leq u-v_0-r$. Using Lemma \ref{le:simple} we obtain
    $$
    C(u,v)-C(v,u) \leq \min\{u-v_0-r,1-u\} \leq \tfrac{1-r-v_0}{2}\leq \tfrac12-r.
    $$

    \textbf{Case VII.} It remains to consider the case $(u,v)\in [1-u_0,1]\times [1-v_0,1]$.  

    Assume first that $u\leq v$. Then, $C(u,v)\leq u$, $C(v,u)\geq u+v-1$ and therefore $C(u,v)-C(v,u)\leq 1-v$. Since $(u_0,v_0)\in\Delta^1_r$ we have $u_0\leq1-r-v_0\leq1-2r$. Then Proposition \ref{pr:d} and Lemma \ref{le:simple} imply that
    $$
    C(u,v)-C(v,u)\leq \min\{v-u,1-v\}\leq \tfrac{1-u}{2}\leq \tfrac{u_0}{2}\leq \tfrac12-r.
    $$

    If $u\geq v$ then using analogous arguments as above we get that $C(u,v)-C(v,u)\leq1-u$, and
    $v_0\leq 1-2r$.
    Then, using Proposition \ref{pr:d} and Lemma \ref{le:simple} we obtain
    $$
    C(u,v)-C(v,u)\leq \min\{u-v,1-u\}\leq  \tfrac{1-v}{2} \leq \tfrac{v_0}{2}\leq \tfrac12-r.
    $$

    We have proved that $C(u,v)-C(v,u)\leq \tfrac{1}{2}-r$ holds for any $(u,v)\in\II^2$. Now,
    \begin{align*}
    \mu_T(C) &= \max\big\{|C(u,v)-C(v,u)| \colon u,v\in \II\big\} \\
    &= \max\big\{C(u,v)-C(v,u) \colon u,v\in \II\big\} \\
    &\leq \tfrac{1}{2}-r = \tfrac12- \mu_R(C),      
    \end{align*}
    which finishes the proof.
\end{proof}

We collect our findings in the main theorem of this section. 

\begin{theorem} \label{th:bounds}
   For any copula $C \in \CC$ the point $\PP = (\mu_R(C),\mu_T(C))$ lies in the hexagon $\A\B\C\D\E\F$.
\end{theorem}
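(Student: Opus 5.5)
The plan is to describe the hexagon $\A\B\C\D\E\F$ by explicit inequalities and check each one using the propositions of this section. The hexagon is non-convex at $\D$. It consists of the points $(r,t)$ with $0\le r\le\tfrac13$, $t\ge 0$ and $t\le \tfrac14 r+\tfrac14$ (edge $\E\F$), together with the condition that $(r,t)$ does not lie in the open triangle $\C\D\E$. The triangle $\C\D\E$ is cut out by the lines $\C\D$ and $\D\E$ and the segment $r=\tfrac13$. The line through $\C(\tfrac13,\tfrac16)$ and $\D(\tfrac3{10},\tfrac15)$ is $t=\tfrac12-r$. The line through $\D$ and $\E(\tfrac13,\tfrac13)$ is $t=4r-1$. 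So the excluded interior is $\{(r,t): t>\tfrac12-r,\ t<4r-1\}$, which forces $r>\tfrac3{10}$. Equivalently, $\PP$ lies in the hexagon iff $0\le r,t\le\tfrac13$, $t\le\tfrac14r+\tfrac14$, and either $t\le\tfrac12-r$ or $t\ge 4r-1$.

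Let $r=\mu_R(C)$ and $t=\mu_T(C)$. The bounds $0\le r,t\le\tfrac13$ are known from Section~\ref{sec:prelim}, and Proposition~\ref{pr:zgornja1} gives $t\le \tfrac14r+\tfrac14$. If $r<\tfrac14$, then $\tfrac12-r>\tfrac14\ge$ the upper bound at $r$, so the first disjunct holds automatically and we are done. Assume now $r\in[\tfrac14,\tfrac13]$. By compactness the maximum defining $\mu_R$ is attained at some $(u,v)$, and by Lemma~\ref{le:delta}(b) this point lies in $\Delta_r^1\cup\Delta_r^2\cup\Delta_r^3\cup\Delta_r^4$. If it lies in $\Delta_r^2$, Proposition~\ref{pr:spodnja} gives $t\ge 4r-1$. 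If it lies in $\Delta_r^1$, Proposition~\ref{pr:zgornja2} gives $t\le\tfrac12-r$.

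The remaining work, and the main obstacle, is reducing $\Delta_r^3$ and $\Delta_r^4$ to these two cases via symmetries that preserve both $\mu_R$ and $\mu_T$.

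For $\Delta_r^4$, pass to $C^T$. We have $\mu_T(C^T)=\mu_T(C)$, and $\widehat{C^T}=(\widehat C)^T$, so $\mu_R(C^T)=\mu_R(C)$. The maximizer becomes $(v,u)$, and the reflection $(u,v)\mapsto(v,u)$ maps $\Delta_r^4$ onto $\Delta_r^2$.

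For $\Delta_r^3$, pass to $\widehat C$. The computation at the end of the proof of Proposition~\ref{pr:spodnja} shows $\mu_T(\widehat C)=\mu_T(C)$. Also $\widehat{\widehat C}=C$, and
$$C(u,v)-\widehat C(u,v)=\widehat C(1-u,1-v)-C(1-u,1-v),$$
so $\mu_R(\widehat C)=\mu_R(C)$. This identity also shows that the maximizer for $\widehat C$ is $(1-u,1-v)$. The map $(u,v)\mapsto(1-u,1-v)$ sends $\Delta_r^3$ onto $\Delta_r^1$, and the sign condition $C-\widehat C=+r$ used in Proposition~\ref{pr:zgornja2} can be arranged by choosing between $C$ and $\widehat C$.

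One point needs care here. Proposition~\ref{pr:zgornja2} tacitly assumes the positive sign $C(u_0,v_0)-\widehat C(u_0,v_0)=r$. If the sign is negative at a point of $\Delta_r^1$, replacing $C$ by $\widehat C$ puts the positive sign at $(1-u_0,1-v_0)\in\Delta_r^3$, not in $\Delta_r^1$. So I would first check that a negative sign on $\Delta_r^1$ at level $r\ge\tfrac14$ is impossible. This follows because $\widehat C(u,v)-C(u,v)\le \widehat C(u,v)\le$ the Fréchet bound, and with $u+v\le 1-r$ it is tight. If that argument fails, the fallback is to rerun the case analysis of Proposition~\ref{pr:zgornja2} with the roles of $C$ and $\widehat C$ swapped.

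Once each of the four triangles is handled, the point $\PP$ satisfies the hexagon inequalities, which completes the proof.
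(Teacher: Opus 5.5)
Your outline follows the paper's proof. The hexagon is $\A\B\E\F$ with the notch $\C\D\E$ removed, the edge $\E\F$ comes from Proposition~\ref{pr:zgornja1}, and for large $r$ the maximizer of $|C-\widehat C|$ is split over the triangles of Lemma~\ref{le:delta}(b). The only structural difference is how you reduce $\Delta_r^3$ and $\Delta_r^4$. The paper keeps the same copula and uses that $|C-\widehat C|$ takes the same value at $(1-u,1-v)$, which sends $\Delta_r^3\to\Delta_r^1$ and $\Delta_r^4\to\Delta_r^2$. Your transposition for $\Delta_r^4$, via $\widehat{C^T}=(\widehat C)^T$, works equally well.

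Two steps are wrong as written. First, for $r<\tfrac14$ you claim $\tfrac12-r>\tfrac14\ge\tfrac14r+\tfrac14$. The last inequality fails for every $r>0$. For $r\in(\tfrac15,\tfrac14)$ there are copulas with $t=\tfrac{1+r}4>\tfrac12-r$, e.g.\ $F_r$ of Proposition~\ref{pr:EF}, so the first disjunct does not hold automatically. Use the second disjunct instead: $4r-1<0\le t$. More generally, for all $r\le\tfrac3{10}$ one has $4r-1\le\tfrac12-r$, so some disjunct always holds; this is why the paper only treats $r>\tfrac3{10}$.

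Second, your claim that a negative sign on $\Delta_r^1$ is impossible for $r\ge\tfrac14$ is false. By Proposition~\ref{pr:CD}, for $r\in[\tfrac3{10},\tfrac13]$ we have $D_r(r,r)=r$, $\widehat D_r(r,r)=0$ and $\mu_R(D_r)=r$. So $C=\widehat D_r$ satisfies $C(r,r)-\widehat C(r,r)=-r$ at $(r,r)\in\Delta_r^1$. Your Fréchet-bound argument only gives $\widehat C(u,v)\le\min\{u,v\}\le\tfrac{1-r}2$, which is not below $r$.

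The premise behind that detour is also miscomputed. Since $\widehat{\widehat C}=C$, passing to $\widehat C$ flips the sign at the \emph{same} point: $\widehat C(u_0,v_0)-\widehat{\widehat C}(u_0,v_0)=+r$. It is $C$ itself, not $\widehat C$, that has $+r$ at $(1-u_0,1-v_0)\in\Delta_r^3$. So the repair is one line: apply Proposition~\ref{pr:zgornja2} to $\widehat C$, which has the same $\mu_R$ and $\mu_T$, as you already checked. This is exactly the device used at the end of the proof of Proposition~\ref{pr:spodnja}, and your fallback amounts to it.

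With the false lemma dropped and the small-$r$ case corrected, your proof is complete. You are right that the proof of Proposition~\ref{pr:zgornja2} tacitly assumes the positive sign, and the paper's proof of the theorem inherits this silently.
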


\begin{proof}
   Since  $\mu_R(C) \in [0, \tfrac13]$ and $\mu_T(C) \in [0, \tfrac13]$, we have $\PP \in [0, \tfrac13] \times [0, \tfrac13]$. Proposition \ref{pr:zgornja1} shows that point $\PP$ lies in the quadrilateral $\A\B\E\F$. Note that the lines $t = 4r - 1$ and $t = \tfrac12 - r$ intersect at the point $\D(\tfrac{3}{10}, \tfrac15)$, while these lines intersect the vertical line $r= \tfrac13$ at the points $\C(\tfrac13, \tfrac16)$ and $\E(\tfrac13, \tfrac13)$, respectively. 
   
   Now suppose that $\mu_R(C) = r > \tfrac{3}{10} > \tfrac14$ and the radial asymmetry of copula $C$ is attained at the point $(u,v) \in \II^2$. By Lemma \ref{le:delta} we have $(u,v) \in \Delta_r^1 \cup \Delta_r^2 \cup \Delta_r^3 \cup \Delta_r^4$. If $(u,v) \in \Delta_r^1$ then $\mu_T(C) \le \tfrac12 - \mu_R(C)$ by Proposition \ref{pr:zgornja2}, so the point $\PP$ lies on or below the line $t = \tfrac12 - r$. If $(u,v) \in \Delta_r^2$ then $\mu_T(C) \ge 4 \mu_R(C) - 1$ by Proposition \ref{pr:spodnja}, so the point $\PP$ lies on or above the line $t = 4r-1$. 
   
   If $(u,v) \in \Delta_r^3$ then  
   \begin{align*}
       |C(1-u,1-v)-\widehat{C}(1-u,1-v)| &= |1-u-v+\widehat{C}(u,v)-(1-u-v+C(u,v))| \\
       &= |\widehat{C}(u,v)-C(u,v)| \\
       &= |C(u,v)-\widehat{C}(u,v)| = \mu_R(C),
   \end{align*}
   so the radial asymmetry of copula $C$ is also attained at the point $(1-u, 1-v) \in \Delta_r^1$. Thus again the point $\PP$ lies on or below the line $t = \tfrac12 - r$ by Proposition \ref{pr:zgornja2}. Finally, if $(u,v) \in \Delta_r^4$ then  
   the radial asymmetry of copula $C$ is also attained at the point $(1-u, 1-v) \in \Delta_r^2$. So the point $\PP$ lies on or above the line $t = 4r-1$ by Proposition \ref{pr:spodnja}. 
   
   It follows that the point $\PP$ does not lie in the interior of the triangle $\C\D\E$ or in the interior of the edge $\C\E$, since it lies either on or below the edge $\C\D$ or on or above the edge $\D\E$. This implies that $\PP \in \A\B\C\D\E\F$, which finishes the proof.
\end{proof}

\section{Realizations} \label{sec:realization}

In this section we show that any point $\PP$ in hexagon $\A\B\C\D\E\F$ is attained by some copula $C \in \CC$. We first show that this is true for the edges $\B\C$, $\C\D$, $\D\E$, and $\E\F$.

\begin{proposition} \label{pr:BC}
    Let $\PP$ be a point on line segment $\B\C$, where $\B(\tfrac13,0), \C(\tfrac13,\tfrac16)$. Then there exists a copula $C \in \CC$, such that $(\mu_R(C),\mu_T(C)) = \PP$.
\end{proposition}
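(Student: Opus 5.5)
We will realize every point $\PP=(\tfrac13,t)$ with $t\in[0,\tfrac16]$ by an explicit one-parameter family of copulas. Each copula is built from three blocks, chosen so that the radial asymmetry is pinned at $\tfrac13$ while the non-exchangeability can be tuned freely. Split $\II$ into thirds, $[0,\tfrac13]$, $[\tfrac13,\tfrac23]$ and $[\tfrac23,1]$. Let $C$ have mass $\tfrac13$ uniformly on the diagonal of $[0,\tfrac13]^2$, that is, an $M$-piece. Put no mass on $[\tfrac13,\tfrac23]^2$ or on $[\tfrac23,1]^2$. The remaining mass goes on the two off-diagonal squares: on $[\tfrac13,\tfrac23]\times[\tfrac23,1]$ place a copula $D_1$ scaled by $\tfrac13$, and on $[\tfrac23,1]\times[\tfrac13,\tfrac23]$ place a copula $D_2$ scaled by $\tfrac13$. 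These conditions guarantee uniform marginals, so $C$ is a copula (an ordinal-sum-type patchwork).

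For the radial asymmetry, evaluate at $(\tfrac13,\tfrac13)$. We have $C(\tfrac13,\tfrac13)=\tfrac13$ and $C(\tfrac23,\tfrac23)=\tfrac13$, because the square $[\tfrac13,\tfrac23]^2$ carries no mass. Hence
$$C(\tfrac13,\tfrac13)-\widehat{C}(\tfrac13,\tfrac13)=\tfrac13-(-\tfrac13+\tfrac13)=\tfrac13 .$$
Since $\mu_R\le\tfrac13$ always, we get $\mu_R(C)=\tfrac13$ for every choice of $D_1,D_2$.

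For the non-exchangeability, first check that $C(u,v)=C(v,u)$ whenever $(u,v)$ is not in one of the two off-diagonal squares. If $\min\{u,v\}\le\tfrac13$, both values equal $\min\{u,v\}$ because of the $M$-piece. If $u,v\in[\tfrac13,\tfrac23]$, both equal $\tfrac13$. If $u,v\ge\tfrac23$, write $C(u,v)=u+v-1+V_C([u,1]\times[v,1])$; the rectangle lies in the massless square $[\tfrac23,1]^2$, so $C(u,v)=u+v-1$, which is symmetric. Now take $u\in[\tfrac13,\tfrac23]$ and $v\in[\tfrac23,1]$, and set $x=3u-1$, $y=3v-2$. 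A direct volume count gives
$$C(u,v)=\tfrac13+\tfrac13 D_1(x,y),\qquad C(v,u)=\tfrac13+\tfrac13 D_2(y,x).$$
Therefore $\mu_T(C)=\tfrac13\max_{x,y}|D_1(x,y)-D_2^T(x,y)|$.

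Now choose $D_1=M$ and $D_2=(1-\lambda)M+\lambda W$ with $\lambda\in[0,1]$. Since $M$ and $W$ are exchangeable, $D_1-D_2^T=\lambda(M-W)$. Its maximum is $\tfrac{\lambda}{2}$, attained at $(\tfrac12,\tfrac12)$. Hence $\mu_T(C)=\tfrac{\lambda}{6}$, and letting $\lambda$ run over $[0,1]$ sweeps $t$ over $[0,\tfrac16]$. The only step needing care is the case check of exchangeability outside the off-diagonal squares, particularly the region $u,v\ge\tfrac23$ and mixed regions such as $u\le\tfrac13\le\tfrac23\le v$. I expect each case to reduce to short volume computations as above.
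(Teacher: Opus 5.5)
Your proof is correct and essentially the paper's. The paper also keeps the $M$-block on $[0,\tfrac13]^2$ to force $\mu_R=\tfrac13$ at $(\tfrac13,\tfrac13)$, and takes a convex path ending at the same shuffle $C_1=M(3,(\tfrac13,\tfrac23),(1,3,2),(1,1,-1))$, which is your $D_1=M$, $D_2=W$. The only difference is the exchangeable starting copula: the paper uses $C_0=M(2,(\tfrac13),(1,2),(1,-1))$, i.e.\ $D_1=D_2=W$, where you use $D_1=D_2=M$.

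Your reduction $\mu_T(C)=\tfrac13\max|D_1-D_2^T|$ proves the upper bound on $\mu_T$ that the paper only asserts. Your four regions already cover all of $\II^2$, so no cases remain and your closing hedge can be dropped. You should also state $\max(M-W)=\tfrac12$ explicitly rather than only exhibiting the point $(\tfrac12,\tfrac12)$.
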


\begin{figure}[h]
    \centering
    \includegraphics[width=0.6\linewidth]{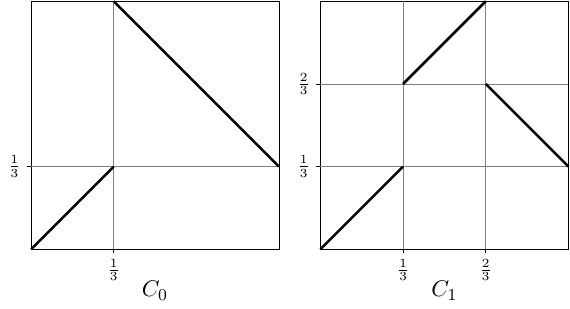} \\[4mm]
    \includegraphics[width=0.6\linewidth]{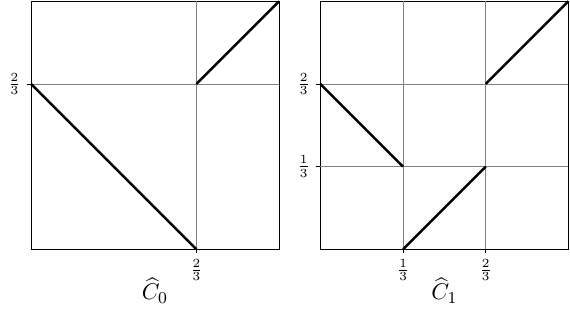}
    \caption{Mass distribution of copulas $C_0$, $\widehat{C}_0$, $C_1$, and $\widehat{C}_1$.}
    \label{fig:06}
\end{figure}

\begin{proof}
Let $C_0$ and $C_1$ be shuffles of $M$
$$C_0 = M(2, (\tfrac13), (1, 2), (1, -1)), \qquad C_1 = M(3, (\tfrac13, \tfrac23), (1, 3, 2), (1, 1, -1)).$$
Then 
$$C_0^T = M(2, (\tfrac13), (1, 2), (1, -1)) = C_0, \qquad C_1^T = M(3, (\tfrac13, \tfrac23), (1, 3, 2), (1, -1, 1)),$$
and
$$\widehat{C}_0 = M(2, (\tfrac23), (1, 2), (-1, 1)), \qquad \widehat{C}_1 = M(3, (\tfrac13, \tfrac23), (2, 1, 3), (-1, 1, 1)).$$

The mass distributions of copulas $C_0$, $\widehat{C}_0$, $C_1$, and $\widehat{C}_1$ are depicted in Figure~\ref{fig:06}. Note that $C_0$ and $C_1$ differ only in the square $[\tfrac13, \tfrac23] \times [\tfrac23, 1]$. Furthermore, for any $\lambda \in \II$ let 
$$ C_\lambda = (1-\lambda)C_0 + \lambda C_1. $$
We have
$$ C_\lambda(\tfrac13, \tfrac13) - \widehat{C}_\lambda(\tfrac13, \tfrac13) = (1-\lambda)\cdot\tfrac13 + \lambda \cdot\tfrac13  - \big((1-\lambda)\cdot 0 + \lambda \cdot 0\big) = \tfrac13,$$
so $\mu_R(C_\lambda) \ge \frac13$. Since $\mu_R(C) \le \frac13$ for any copula $C$, this implies $\mu_R(C_\lambda) = \frac13$ for any $\lambda \in \II$. Next, note that $C_\lambda$ and $C_\lambda^T$ differ only in the squares $[\tfrac13, \tfrac23] \times [\tfrac23, 1]$ and $[\tfrac23, 1] \times [\tfrac13, \tfrac23]$, and that the non-exchangeability of $C_\lambda$ is attained in the middle of these squares, i.e. at the point $(\tfrac12, \tfrac56)$, where
$$ C_\lambda(\tfrac12, \tfrac56) - C_\lambda(\tfrac56, \tfrac12) = (1-\lambda)\cdot\tfrac13 + \lambda \cdot\tfrac12  - \big((1-\lambda)\cdot \tfrac13 + \lambda \cdot  \tfrac13\big)  = \lambda \cdot \tfrac16,$$
so $\mu_T(C_\lambda) = \tfrac{\lambda}{6}$. Now, point $\PP$ on line segment $\B\C$ has coordinates $\PP(\tfrac13, p)$, where $p \in [0, \tfrac16]$, 
so $\PP = (\mu_R(C_{6p}),\mu_T(C_{6p}))$.
\end{proof}

\begin{proposition} \label{pr:CD}
    Let $\PP$ be a point on line segment $\C\D$, where $\C(\tfrac13,\tfrac16), \D(\tfrac{3}{10},\tfrac15)$. Then there exists a copula $C \in \CC$, such that $(\mu_R(C),\mu_T(C)) = \PP$.
\end{proposition}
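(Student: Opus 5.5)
The plan is to realize the segment $\C\D$ by an explicit one-parameter family of shuffles of $M$, as in the proof of Proposition~\ref{pr:BC}. A point on $\C\D$ has coordinates $\PP(r,4r-1)$ with $r\in[\tfrac{3}{10},\tfrac13]$. I will therefore construct, for each such $r$, a copula $C^{(r)}$ with $\mu_R(C^{(r)})=r$ and $\mu_T(C^{(r)})=4r-1$.

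The construction is guided by the equality cases in the proof of Proposition~\ref{pr:spodnja}. Equality $\mu_T(C)=4r-1$ there (first case, $u+v\le1$) forces several conditions:
\begin{itemize}
\item radial asymmetry is attained at $(u,v)=(r,2r)\in\Delta_r^2$ with $C(r,2r)-\widehat C(r,2r)=r$;
\item $C(r,2r)=C(1-2r,2r)=C(1-2r,1-r)$;
\item non-exchangeability is attained at $(1-2r,1-r)$, with $C(1-2r,1-r)-C(1-r,1-2r)=4r-1$.
\end{itemize}
So I would choose a shuffle $M(n,J,\pi,\omega)$ whose splitting points lie among $r$, $2r$, $1-2r$ and $1-r$ (all affine in $r$). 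Its strips should be arranged so that three things happen. First, the mass in $[0,r]\times\II$ sits inside $[0,r]\times[0,2r]$, giving $C(r,2r)=r$. Second, no mass lies in $[r,1-2r]\times[0,2r]$ or in $[1-2r,1]\times[2r,1-r]$. Third, the survival copula has $\widehat C(r,2r)=0$, i.e.\ no mass of $C$ in $[1-r,1]\times[1-2r,1]$. The asymmetric part should be placed off the diagonal: a block in $[1-2r,1-r]\times[1-r,1]$ that is not mirrored in $[1-r,1]\times[1-2r,1-r]$, with a middle block of width $3r-1$ that produces the excess $4r-1$.

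As a sanity check, at $r=\tfrac13$ the construction should reduce to the copula $C_1$ from Proposition~\ref{pr:BC}, which sits at the point $\C$. At $r=\tfrac3{10}$ one should also obtain equality $\mu_T=\tfrac12-r$, so the radial asymmetry must also be attained at a point of $\Delta_r^1$. This indicates that a second block, near $(\tfrac3{10},\tfrac25)$, is needed in the design.

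Once the shuffle is fixed, the verification is as follows.
\begin{enumerate}
\item Compute $C^{(r)}(r,2r)-\widehat{C}^{(r)}(r,2r)=r$, giving $\mu_R\ge r$.
\item Compute $C^{(r)}(1-2r,1-r)-C^{(r)}(1-r,1-2r)=4r-1$, giving $\mu_T\ge 4r-1$.
\item Prove the matching upper bounds $|C-\widehat C|\le r$ and $|C-C^T|\le 4r-1$ everywhere.
\end{enumerate}
For step 3, $C$, $\widehat C$ and $C^T$ are all shuffles of $M$ with respect to the common grid generated by the points $r,2r,1-2r,1-r$ and their reflections $t\mapsto 1-t$. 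On each grid cell the differences are piecewise linear, being minima and maxima of affine functions. Hence their extrema occur at grid vertices or at the centres of cells carrying diagonal mass, as at the point $(\tfrac12,\tfrac56)$ in Proposition~\ref{pr:BC}. This reduces step 3 to a finite check.

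The main obstacle is choosing the permutation and orientations so that the upper bounds in step 3 actually hold. Putting in enough mass to create the asymmetry $4r-1$ tends to create additional radial asymmetry elsewhere, exceeding $r$. I would first try the three-block pattern of $C_1$, modified by inserting a thin strip of width $3r-1$ that is moved between the blocks. If that fails, I would instead take a convex combination $(1-\lambda)C_1+\lambda D$, where $D$ realizes the vertex $\D$ and both $C_1$ and $D$ attain their extremal values at common points. Then both measures vary affinely in $\lambda$, as in Proposition~\ref{pr:BC}, and trace out the segment.
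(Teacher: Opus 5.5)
Your plan targets the wrong edge. The segment $\C\D$ joins $\C(\tfrac13,\tfrac16)$ and $\D(\tfrac3{10},\tfrac15)$, so it lies on the line $t=\tfrac12-r$, not on $t=4r-1$. The latter passes through $\D$ and $\E(\tfrac13,\tfrac13)$ and carries the edge $\D\E$. Everything you extract from the equality case of Proposition~\ref{pr:spodnja} is therefore tailored to $\D\E$: attainment of $\mu_R$ at $(r,2r)\in\Delta_r^2$, attainment of $\mu_T$ at $(1-2r,1-r)$, and the inserted strip of width $3r-1$. Even a successful execution would not prove this statement. Your own sanity check shows the clash: at $r=\tfrac13$ you require $\mu_T=4\cdot\tfrac13-1=\tfrac13$, whereas $\mu_T(C_1)=\tfrac16$. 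In fact, for $r>\tfrac3{10}$ we have $\tfrac12-r<4r-1$. So by Proposition~\ref{pr:spodnja}, a copula realizing a point of $\C\D$ other than $\D$ cannot attain its radial asymmetry in $\Delta_r^2\cup\Delta_r^4$ at all. It must attain it in $\Delta_r^1\cup\Delta_r^3$, and the bound to make sharp is Proposition~\ref{pr:zgornja2}, $\mu_T\le\tfrac12-r$.

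The fallback fails as well. Take $C'=(1-\lambda)C_1+\lambda D$ with $D$ realizing $\D$. The triangle inequality gives $\mu_R(C')\le(1-\lambda)\tfrac13+\lambda\tfrac3{10}$ and $\mu_T(C')\le(1-\lambda)\tfrac16+\lambda\tfrac15$. Equality in the second requires $C_1-C_1^T$ and $D-D^T$ to reach their norms at a common point with the same sign. But $C_1-C_1^T$ vanishes outside $[\tfrac13,\tfrac23]\times[\tfrac23,1]$ and its transpose. It reaches $\pm\tfrac16$ only at $(\tfrac12,\tfrac56)$ and $(\tfrac56,\tfrac12)$, where $d_T=\tfrac16<\tfrac15$. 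So no such $D$ exists, and for $0<\lambda<1$ you get $\mu_R(C')+\mu_T(C')<\tfrac12$, a point strictly below $\C\D$. In any case, $D$ is never constructed.

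What does work is your idea of keeping the three-block pattern of $C_1$, without any extra strip. Move its splitting points to $r$ and $1-r$, i.e. take $D_r=M(3,(r,1-r),(1,3,2),(1,1,-1))$, so that $D_{1/3}=C_1$. Then:
\begin{enumerate}
\item $D_r(r,r)-\widehat D_r(r,r)=r$, with $(r,r)\in\Delta_r^1$.
\item A direct check on $\Delta_r^1\cup\Delta_r^2$ using $1-3r<r$, together with the reflection $(u,v)\mapsto(1-u,1-v)$, gives $\mu_R(D_r)=r$.
\item Proposition~\ref{pr:zgornja2} gives $\mu_T(D_r)\le\tfrac12-r$.
\item $D_r(\tfrac12,\tfrac12+r)-D_r(\tfrac12+r,\tfrac12)=\tfrac12-r$ gives equality.
\end{enumerate}
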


\begin{proof}
Let $r \in [\tfrac{3}{10}, \tfrac13]$ and let $D_r$ be shuffle of $M$
$$D_r = M(3, (r, 1-r), (1, 3, 2), (1, 1, -1)).$$
Then 
$$D_r^T = M(3, (r, 2r), (1, 3, 2), (1, -1, 1)) \quad \text{and} \quad \widehat{D}_r = M(3, (r, 1-r), (2, 1, 3), (-1, 1, 1)).$$

\begin{figure}[h]
    \centering
    \includegraphics[width=0.6\linewidth]{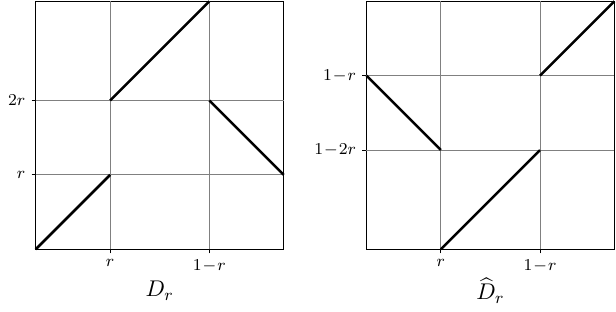}
    \caption{Mass distribution of copulas $D_r$ and $\widehat{D}_r$ for $r=\tfrac{3}{10}$.}
    \label{fig:07}
\end{figure}

The mass distributions of copulas $D_r$ and $\widehat{D}_r$ are depicted in Figure~\ref{fig:07}. For any $r \in [\tfrac{3}{10}, \tfrac13]$ we have
$$ D_r(r,r) - \widehat{D}_r(r,r) = r,$$
so, $\mu_R(D_r) \ge r$. In order to prove that $\mu_R(D_r) \le r$ we need to show that $|D_r(u,v) - \widehat{D}_r(u,v)| \le r$ for any $(u,v) \in \Delta_r^1 \cup \Delta_r^2 \cup \Delta_r^3 \cup \Delta_r^4$ by Lemma \ref{le:delta} (b).

The triangle $\Delta_r^1$ has vertices $(r,r)$, $(r,1-2r)$, and $(1-2r,r)$. Since $1-2r < 2r \le 1-r$, we have $\Delta_r^1 \subset [r, 1-r] \times [r, 2r]$, so $D_r(u,v) = r$ and
$$ 0\le \widehat{D}_r(u,v) = \widehat{D}_r(u,v) - \widehat{D}_r(r,r) \le u-r + v - r \le 1-3r < r$$
by $1$-Lipschitz condition and since $u+v \le 1-r$. It follows that
$$ 0 < D_r(u,v) - \widehat{D}_r(u,v) \le r$$
for any $(u,v) \in \Delta_r^1$.  The triangle $\Delta_r^2$ has vertices $(r,2r)$, $(r,1-r)$, and $(1-2r,1-r)$.  If $(u,v) \in \Delta_r^2$ then $D_r(u,v) = u$ and $D_r(1-u,1-v) = r$, so
$$ D_r(u,v) - \widehat{D}_r(u,v) = u - (u+v-1+ D_r(1-u,1-v)) = 1-v-r \le 1-u-2r \le 1-3r \le r,$$
since $v\ge u+r$, $u\ge r$, and $r\ge \tfrac{3}{10}$. Furthermore, $1-v-r\ge 0$, so
$|D_r(u,v) - \widehat{D}_r(u,v)| \le r$. 
If $(u,v) \in \Delta_r^3 \cup \Delta_r^4$ then $(1-u,1-v) \in \Delta_r^1 \cup \Delta_r^2$, so 
$$|D_r(u,v) - \widehat{D}_r(u,v)| = |D_r(1-u,1-v) - \widehat{D}_r(1-u,1-v)| \le r$$
by above. 

We have thus proved that $\mu_R(D_r) = r$. Since radial asymmetry is attained at a point in $\Delta_r^1$, we have 
$$\mu_T(D_r) \le \tfrac12 - \mu_R(D_r) = \tfrac12-r$$ 
by Proposition \ref{pr:zgornja2}. Furthermore, since the point $(\tfrac12, \tfrac12+r)$ lies on the main diagonal of the square $[r,1-r]\times[2r,1]$ and the point $(\tfrac12+r, \tfrac12)$ lies on the opposite diagonal of the square $[1-r,1]\times[r,2r]$, we have
$$ D_r(\tfrac12, \tfrac12+r) - D_r(\tfrac12+r, \tfrac12)= \tfrac12 -r,$$
so $\mu_T(D_r) = \tfrac12 -r$.
Now, point $\PP$ on line segment $\C\D$ has coordinates $\PP(r, \tfrac12 -r)$, where $r \in [\tfrac{3}{10}, \tfrac13]$, 
so $\PP = (\mu_R(D_r),\mu_T(D_r))$.
\end{proof}

\begin{proposition} \label{pr:DE}
    Let $\PP$ be a point on line segment $\D\E$, where $\D(\tfrac{3}{10},\tfrac15), \E(\tfrac13,\tfrac13)$. Then there exists a copula $C \in \CC$, such that $(\mu_R(C),\mu_T(C)) = \PP$.
\end{proposition}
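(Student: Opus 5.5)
The plan is to mirror the proof of Proposition~\ref{pr:CD}. For each $r\in[\tfrac{3}{10},\tfrac13]$ I will construct a shuffle of $M$, call it $E_r$, with $(\mu_R(E_r),\mu_T(E_r))=(r,4r-1)$. The segment $\D\E$ is exactly the line $t=4r-1$, which is the equality case of Proposition~\ref{pr:spodnja}. So I will build $E_r$ so that every inequality in the first case of that proof ($u+v\le 1$, attained at the point $(u,v)=(r,2r)\in\Delta_r^2$) is an equality. That requires
$$E_r(r,2r)=r,\qquad E_r(1-r,1-2r)=1-3r,\qquad E_r(1-2r,1-r)=r .$$
The second condition gives $\widehat E_r(r,2r)=0$, hence $E_r(r,2r)-\widehat E_r(r,2r)=r$. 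The first and third together give $E_r(1-2r,1-r)-E_r(1-r,1-2r)=4r-1$.

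For the endpoint $\E$ I take $r=\tfrac13$ and the shuffle $M(3,(\tfrac13,\tfrac23),(2,3,1),(1,1,1))$. Direct computation gives $C(\tfrac13,\tfrac23)=\tfrac13$, $C(\tfrac23,\tfrac13)=0$ and $\widehat C(\tfrac13,\tfrac23)=0$, so both measures equal $\tfrac13$. For general $r$ I deform this by splitting $\II$ at points depending on $r$ (such as $r$, $1-2r$, $1-r$) and choosing the permutation and orientations so that the three conditions above hold. The resulting family should degenerate to a copula realizing $\D$ at $r=\tfrac3{10}$, where the lines $t=4r-1$ and $t=\tfrac12-r$ meet. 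Concretely, I place a block of mass $r$ in $[0,r]\times[r,2r]$, a block of mass $1-3r$ on the rectangle with corners $(r,0)$ and $(1-2r,1-3r)$ or similar, and distribute the remaining mass to make the marginals uniform.

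With $E_r$ fixed, I verify the two values separately.

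\textbf{Radial asymmetry.} By Lemma~\ref{le:delta}(b) it suffices to check $|E_r-\widehat E_r|\le r$ on $\Delta_r^1\cup\Delta_r^2$. The triangles $\Delta_r^3,\Delta_r^4$ follow from the symmetry $(u,v)\mapsto(1-u,1-v)$, exactly as in Proposition~\ref{pr:CD}. On each triangle I use that $E_r$ is piecewise linear there, together with the $1$-Lipschitz estimates and $r\ge\tfrac3{10}$, i.e.\ $1-3r\le r$.

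\textbf{Non-exchangeability.} The lower bound $\mu_T(E_r)\ge 4r-1$ holds by construction, and it is also forced by Proposition~\ref{pr:spodnja}. For the upper bound I must show directly that $E_r(u,v)-E_r(v,u)\le 4r-1$ for all $(u,v)$. By Lemma~\ref{le:delta}(a) only $\Delta_t^2\cup\Delta_t^4$ with $t=4r-1$ matter. There $E_r-E_r^T$ is piecewise linear, so its maximum over each cell of the grid generated by the shuffle's splitting points is attained at finitely many vertices, which I check.

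The main obstacle is choosing the shuffle so that both bounds hold simultaneously, in particular the upper bound $\mu_T\le 4r-1$, since no earlier proposition supplies it. If a single shuffle family fails, my fallback is a convex combination $(1-\lambda)D_{3/10}+\lambda C_E$, with $C_E$ the copula realizing $\E$ above. I would check that both copulas attain their radial asymmetry at a common point (so $\mu_R$ is affine in $\lambda$) and their non-exchangeability at a common point (so $\mu_T$ is affine in $\lambda$). Affine dependence of both measures on $\lambda$ would make the image of $\lambda\mapsto(\mu_R,\mu_T)$ the segment $\D\E$, as in Proposition~\ref{pr:BC}.
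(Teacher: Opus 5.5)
Your outline matches the paper's: a family of shuffles of $M$ whose radial asymmetry $r$ and non-exchangeability $t=4r-1$ are both attained at $(r,2r)$, with the upper bounds checked on $\Delta_r^1\cup\Delta_r^2$ and on $\Delta_t^2$ via Lemma~\ref{le:delta}. Your three conditions are correct. Indeed, once $E_r-\widehat{E}_r=r$ at $(r,2r)$, the bound $E_r(1-r,1-2r)\ge W(1-r,1-2r)=1-3r$ forces $E_r(r,2r)=r$ and $E_r(1-r,1-2r)=1-3r$. Your endpoint copula is exactly the paper's $E_{1/3}$.

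\textbf{The gap.} For $r<\tfrac13$ you never actually produce $E_r$. Yet everything the proposition asserts depends on that choice, in particular $\mu_T(E_r)\le 4r-1$, which no earlier result supplies. The one concrete sketch you give contradicts your own conditions. Since $r\le 1-2r\le 2r$, a block of mass $r$ on $[0,r]\times[r,2r]$ carries mass $1-3r$ in the strip $r\le v\le 1-2r$. Together with the block of mass $1-3r$ on $[r,1-2r]\times[0,1-3r]$ this gives $E_r(1-r,1-2r)\ge 2(1-3r)>1-3r$. Hence $\widehat{E}_r(r,2r)\ge 1-3r$ and $E_r(r,2r)-\widehat{E}_r(r,2r)\le 4r-1<r$.

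What is needed is an explicit family, such as the paper's $E_r=M(5,(1-3r,r,2r,1-r),(1,3,5,4,2),(1,1,1,1,1))$. It has increasing diagonals on $[0,1-3r]^2$, $[1-3r,r]\times[1-2r,2r]$, $[r,2r]\times[1-r,1]$, $[2r,1-r]^2$ and $[1-r,1]\times[1-3r,1-2r]$. For it, $E_r\equiv 1-3r$ on $\Delta_r^1$, while $E_r\equiv r$ and $\widehat{E}_r=u+v-3r$ on $\Delta_r^2$. A cell-by-cell computation on $\Delta_t^2$ (eight cases in the paper) then shows $0\le E_r(u,v)-E_r(v,u)\le t$. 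This is where $r\ge\tfrac{3}{10}$ is really used, e.g.\ via $1-2r\le 2t$ and lower bounds of the form $10r-3\ge 0$.

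\textbf{The fallback does not repair this.} Recall that $|C-\widehat{C}|$ takes the same value at $(u,v)$ and $(1-u,1-v)$.
\begin{enumerate}
\item Any copula realizing $\E$ attains its radial asymmetry only where $d_R\ge\tfrac13$, i.e.\ at $(\tfrac13,\tfrac13)$, $(\tfrac13,\tfrac23)$, $(\tfrac23,\tfrac13)$, $(\tfrac23,\tfrac23)$. The first and last are excluded by Proposition~\ref{pr:zgornja2}, which would give $\mu_T\le\tfrac16$.
\item The first case in the proof of Proposition~\ref{pr:spodnja} shows that if radial asymmetry $r$ is attained at $(u,v)$ with $u+v\le 1$, then $\mu_T\ge u+v-1+r$ (passing to $\widehat{C}$ if the sign is negative). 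At $(\tfrac13,\tfrac23)$ this bound equals $r$.
\item So no copula realizing $\D$, where $\mu_T=\tfrac15<\tfrac{3}{10}$, attains its radial asymmetry at $(\tfrac13,\tfrac23)$ or at $(\tfrac23,\tfrac13)$.
\end{enumerate}
Consequently the two endpoint copulas never share an attainment point. For $0<\lambda<1$, $\mu_R$ of the mixture is strictly below the affine interpolation, and the argument of Proposition~\ref{pr:BC} is unavailable.

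For your specific pair the mixture actually leaves $\D\E$. Both $D_{3/10}-\widehat{D}_{3/10}$ and $D_{3/10}-D_{3/10}^T$ equal $\tfrac{1}{30}$ at $(\tfrac13,\tfrac23)$. There $C_E-\widehat{C}_E$ and $C_E-C_E^T$ both have a sharp peak of height $\tfrac13$. So for $\lambda$ near $1$ you get $\mu_R=\mu_T=\tfrac{\lambda}{3}+\tfrac{1-\lambda}{30}$, a point on the diagonal $t=r$ rather than on $\D\E$.
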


\begin{proof}
    We choose $r\in [\tfrac{3}{10},\tfrac13]$ and denote $t=4r-1$. Observe that then 
    $$
    0\leq 1-3r<r<2r\le 1-r<1.
    $$ 
    Let us consider the shuffle of $M$ 
    $$
    E_r=M(5,(1-3r,r,2r,1-r),(1,3,5,4,2),(1,1,1,1,1)).
    $$ 
    Then
    $$
    E_r^T=M(5,(1-3r,1-2r,2r,1-r),(1,5,2,4,3),(1,1,1,1,1))
    $$
    and 
    $$
    \widehat{E}_r=M(5,(r,1-2r,1-r,3r),(4,2,1,3,5),(1,1,1,1,1)).
    $$
    The mass distributions of copulas $E_r$ and $\widehat{E}_r$ are depicted in Figure~\ref{fig:08}.

\begin{figure}[h]
    \centering
    \includegraphics[width=0.6\linewidth]{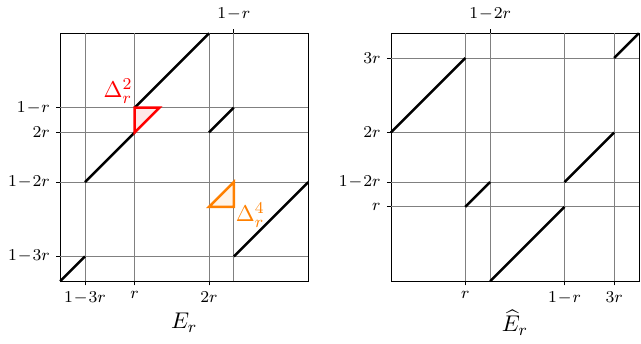}
    \caption{Mass distribution of copulas $E_r$ and $\widehat{E}_r$ for $r=\tfrac{3}{10}$.}
    \label{fig:08}
\end{figure}

    We claim that $\mu_R(E_r)=r$ and $\mu_T(E_r)=t$.

    Note first that $E_r(r,2r)=r$ and $\widehat{E}_r(r,2r)=0$. It follows that $\mu_R(E_r)\ge r$. 
    To show that $\mu_R(E_r)\leq r$ we have to show that 
    $$
    |E_r(u,v)-\widehat{E}_r(u,v)|\leq r
    $$
    for all $(u,v)\in\Delta^1_r\cup\Delta^2_r$. This suffices by Lemma \ref{le:delta} (b) and symmetric position of the $\Delta_r^i$, $i=1,2,3,4$, sets.

    Assume first that $(u,v)\in\Delta_r^1$. Note that then $u\ge r$, $v\ge r$ and $u+v\le 1-r$. Since $u\le r$ and $v\leq 1-r-u\leq 1-2r$ we have 
    \begin{equation}\label{E_r=1-3r}
    E_r(u,v)=1-3r.
     \end{equation}
    Since both $u$ and $v$ lie in the interval $[r, 1-2r]$ it follows that both $1-u$ and $1-v$ belong to the interval $[2r,1-r]$. Hence, we have 
    $$
    r\leq E_r(1-u,1-v)\leq 1-2r.
    $$
    Then 
    $$
    0\leq \widehat{E}_r(u,v) = u + v - 1 + E_r(1-u,1-v) \leq  1-r - 1 + 1-2r = 1-3r,
    $$ 
    since $\widehat{E}_r$ is a copula and $u+v \le 1-r$. Using these inequalities and \eqref{E_r=1-3r} we obtain
    $$
    0\leq E_r(u,v)-\widehat{E}_r(u,v)\leq 1-3r < r.
    $$

    Assume now that $(u,v)\in\Delta_r^2$, so that $u\ge r$, $v\le 1-r$ and $v\ge u+r$. Since $r\ge\frac{3}{10}$ these relations imply that $r\le u\le 2r$ and $2r\le v\le 1-r$. Then it is easy to observe that
    \begin{equation}\label{E_r=r}
    E_r(u,v)=r.
     \end{equation}
    Also, since $1-3r<1-2r\leq 1-u\leq 1-r$ and $1-3r<r\leq 1-v\leq 1-2r$ it follows that  
    $$
     E_r(1-u,1-v)=1-3r.
    $$
    Then 
    $$
    \widehat{E}_r(u,v)=u+v-3r.
    $$ 
    Using this equality and \eqref{E_r=r} we obtain
    $$
    E_r(u,v)-\widehat{E}_r(u,v)\leq E_r(u,v)= r
    $$
    and
    $$
    E_r(u,v)-\widehat{E}_r(u,v)=4r-u-v\geq 3r-1 > -r.
    $$
    This completes the proof of the claim that $\mu_R(E_r)\leq r$, and thus $\mu_R(E_r)=r$.

    Next, we consider $\mu_T(E_r)$. Note that $E_r(r,2r)=r$ and $E_r(2r,r)=1-3r$, since $1-3r < r \le 1-2r$. Then $E_r(r,2r)-E_r(2r,r)=4r-1=t$ and thus $\mu_T(E_r)\ge t$. 
    
    To show that $\mu_T(E_r)\leq t$ it suffices to show that 
    \begin{equation}\label{mu_T-bound}
    |E_r(u,v)-{E_r^T}(u,v)|\leq t
    \end{equation}
    for all $(u,v)\in\Delta^2_t$. Namely, function $d_T(u,v)$ exceeds $t$ only on $\Delta_t^2\cup\Delta_t^4$ by Lemma \ref{le:delta}~(a).

    To prove \eqref{mu_T-bound}, we consider several cases depending on the position of $(u,v)$ in $\Delta_t^2$ (see Figure~\ref{fig:09}). Note that we have $u\ge t$, $v\le 1-t$ and $v\ge u+t$. 

    \begin{figure}
    \centering
    \includegraphics[width=0.6\linewidth]{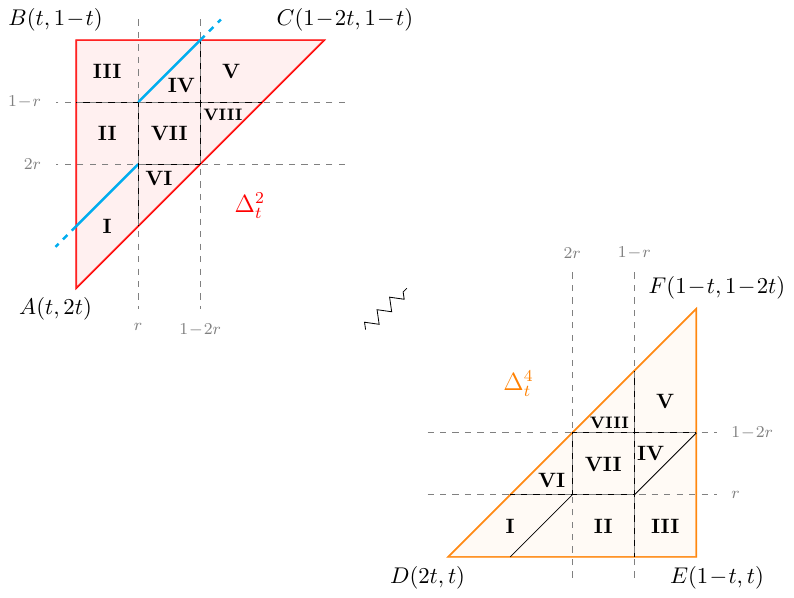}
    \caption{Regions considered in the proof of Proposition \ref{pr:DE} for $r=\tfrac{3}{10}$ and $t=\tfrac15$. The cyan lines indicate the mass of the copula $E_r$ inside triangle $ABC$. There is no mass of $E_r$  inside triangle $DEF$.}
    \label{fig:09}
\end{figure}

    The vertices of the triangle $\Delta_t^2$ are $A(t,2t)$, $B(t,1-t)$ and $C(1-2t,1-t)$. Since $r\in [\frac{3}{10},\frac13]$ it follows that $1-3r<t\le r$, $1-2r\le 2t \le 2r$ and so the vertex $(t,2t)$ lies in the square $R_2=[1-3r,r]\times [1-2r,2r]$. Since $2t-t=t\le r$ it lies below the main diagonal of the square $R_2$. The same relations imply also that the vertex $(1-2t,1-t)$ lies below the main diagonal in the square $R_3=[r,2r]\times [1-r,1]$. Namely, we have $1-2t\le 1-2r$ and $1-t\ge 1-r$.

    The vertices of the triangle $\Delta_t^4$ are $D(2t,t)$, $E(1-t,t)$ and $F(1-t,1-2t)$. Since $1-r\le 1-t$, $t \le 1-2r$ and $t-(1-t)=2t-1\ge -2r$, it follows that the vertex $(1-t,t)$ lies above the main diagonal in the square $R_5=[1-r,1]\times [1-3r,1-2r]$.

    \textbf{Case I.} Assume first that $t\le u\le r$ and $u+t\le v\le u+r$. Then $E_r(u,v)=1-3r+v-(1-2r)=v-r$ and $E_r(v,u)=1-3r$. So, $E_r(u,v)-E_r(v,u)=v+2r-1.$ It follows that
    \begin{align*}
    0&\le 10r-3=2t+2r-1\leq u+t+2r-1\\
    &\leq v+2r-1=E_r(u,v)-E_r(v,u)\\ 
    &\leq u+3r-1\leq 4r-1=t.
    \end{align*}

    \textbf{Case II.} Take now $t\le u\le r$ and $u+r\le v\le 1-r$. We find that $E_r(u,v)=u$ and $E_r(v,u)=1-3r$. So, $E_r(u,v)-E_r(v,u)=u+3r-1.$ Then it follows that
    \begin{align*}
    0&< 7r-2=t+3r-1\\
    &\leq u+3r-1=E_r(u,v)-E_r(v,u)\\ 
    &\leq 4r-1=t.
    \end{align*}
    
    \textbf{Case III.} 
    Take now $1-r\le v\le 1-t$ and $t\le u\le v+2r-1$. We find that $E_r(u,v)=u$ and $E_r(v,u)=1-3r+v-(1-r)=v-2r$. So, $E_r(u,v)-E_r(v,u)=u-v+2r.$ Then it follows that
    \begin{align*}
    0&\leq 10r-3= t+t-1+2r\\
    &\leq u-v+2r=E_r(u,v)-E_r(v,u)\\ 
    &\leq 4r-1=t.
    \end{align*}
    
    \textbf{Case IV.} Next, we assume that $1-r\le v\le 1-t$ and $v-(1-2r)\le u\le 1-2r$. In this case, we find that $E_r(u,v)=r+v-(1-r)=v+2r-1$ and $E_r(v,u)=v-2r$. So, $$E_r(u,v)-E_r(v,u)=4r-1=t.$$ 
        
    \textbf{Case V.} Assume now that $1-r\le v\le 1-t$ and $1-2r\le u\le v-t$. Then $E_r(u,v)=v+2r-1$ and $E_r(v,u)=1-3r+v-(1-r)+u-(1-2r)=u+v-1$. So, $E_r(u,v)-E_r(v,u)=2r-u$. Then, it follows that
    \begin{align*}
    0&\leq 10r-3=2r-((1-t)-t)\\
    &\leq 2r-(v-t)\\
    &\leq 2r-u=E_r(u,v)-E_r(v,u)\\ 
    &\leq 4r-1=t.
    \end{align*}
    
    \textbf{Case VI.} Now, we take $r\le u\le 1-2r$ and $u+t\le v\le 2r$. Then $E_r(u,v)=v-r$ and $E_r(v,u)=1-3r$. So, $E_r(u,v)-E_r(v,u)=v+2r-1$. Then, it follows that
    \begin{align*}
    0&< 7r-2\leq u+t+2r-1\\
    &\leq v+2r-1=E_r(u,v)-E_r(v,u)\\ 
    &\leq 4r-1=t.
    \end{align*}
    
    \textbf{Case VII.} Next, we suppose that $r\le u\le 1-2r$ and $2r\le v\le 1-r$. Then $E_r(u,v)=r$ and $E_r(v,u)=1-3r$. So, 
    $$E_r(u,v)-E_r(v,u)=4r-1=t.$$ 
    
    \textbf{Case VIII.} Finally, assume that $2r\le v\le 1-r$ and $1-2r\le u\le v-t$. Then $E_r(u,v)=r$ and $E_r(v,u)=1-3r+u-(1-2r)=u-r$. So, $E_r(u,v)-E_r(v,u)=2r-u$. Then, it follows that
    \begin{align*}
    0&< 7r-2=2r-((1-r)-t)\\
    &\leq 2r-(v-t)\\
    &\leq 2r-u=E_r(u,v)-E_r(v,u)\\ 
    &\leq 4r-1=t.
    \end{align*}

    This completes the proof that $\mu_T(E_r)\le t$, and therefore it follows that $\mu(E_r)=t=4r-1$. Finally, point $\PP$ on line segment $\D\E$ has coordinates $\PP(r, 4r-1)$ for $r\in [\tfrac{3}{10},\tfrac13]$, and so $\PP=(\mu_R(E_r),\mu_T(E_r))$. The line segment $\D\E$ is realized.
\end{proof}

\begin{proposition} \label{pr:EF}
    Let $\PP$ be a point on line segment $\E\F$, where $\E(\tfrac13,\tfrac13), \F(0,\tfrac14)$. Then there exists a copula $C \in \CC$, such that $(\mu_R(C),\mu_T(C)) = \PP$.
\end{proposition}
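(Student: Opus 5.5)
The plan is to build a one-parameter family of shuffles of $M$ that joins a copula realizing $\F(0,\tfrac14)$ to one realizing $\E(\tfrac13,\tfrac13)$, and to show that every member lies on the line $t=\tfrac14 r+\tfrac14$. A convex combination of the two endpoint copulas, as in Proposition~\ref{pr:BC}, will not work. A radially symmetric copula satisfies $C(u,1-u)=\widehat C(u,1-u)=C(1-u,u)$, so its asymmetry $C-C^T$ vanishes on the antidiagonal. The copula realizing $\E$, however, attains non-exchangeability $\tfrac13$ only at $(\tfrac13,\tfrac23)$, because $d_T=\tfrac13$ only there and at its transpose. So the two endpoint copulas never reach their maximal asymmetry at a common point, and a direct construction is needed.

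For $s\in[0,\tfrac1{12}]$ I would take
$$G_s=M\big(4,(\tfrac14+s,\tfrac12+2s,\tfrac34+3s),(2,4,1,3),(1,1,1,1)\big).$$
Its pieces have lengths $l_1=l_2=l_3=\tfrac14+s$ and $l_4=\tfrac14-3s$. In the vertical direction the order of the pieces is: piece $3$ at the bottom, then piece $1$, then piece $4$, then piece $2$ on top. At $s=0$ all pieces have length $\tfrac14$. A $180^\circ$ rotation maps piece $i$ onto piece $5-i$, so $G_0$ is radially symmetric, $\mu_R(G_0)=0$, and $G_0(\tfrac14,\tfrac12)-G_0(\tfrac12,\tfrac14)=\tfrac14$. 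At $s=\tfrac1{12}$ piece $4$ disappears and $G_{1/12}=M(3,(\tfrac13,\tfrac23),(2,3,1),(1,1,1))$. For this copula $C(\tfrac13,\tfrac23)-C(\tfrac23,\tfrac13)=\tfrac13$, and also $C(\tfrac13,\tfrac23)-\widehat C(\tfrac13,\tfrac23)=\tfrac13-C(\tfrac23,\tfrac13)=\tfrac13$, so it gives $\E$.

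For the non-exchangeability I would evaluate at $(u,v)=(l_1,l_1+l_3)$. All mass with $x\le l_1$ lies at heights in $[l_3,l_3+l_1]$, so $G_s(u,v)=l_1$. All mass with $x\le l_1+l_3$ and $x>l_1$ belongs to piece $2$, which sits on top, and no mass of piece $1$ lies below height $l_1=l_3$; hence $G_s(v,u)=0$. Therefore
$$\mu_T(G_s)\ge \tfrac14+s.$$
For the radial asymmetry the target is $\mu_R(G_s)=4s$. For the lower bound I would look for a point where the mismatch between the piece lengths $\tfrac14+s$ and $\tfrac14-3s$ accumulates to $4s$; I would try the corners of the shuffle grid, such as $(l_1,l_3)$ and its images, using $\widehat G_s(u,v)=u+v-1+G_s(1-u,1-v)$ and the fact that rotation maps the grid of $G_s$ to a grid shifted by $4s$. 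For the upper bound I would invoke Lemma~\ref{le:delta}~(b), which restricts the check to $\Delta_{4s}^1\cup\Delta_{4s}^2$ (the other two triangles follow by the symmetry $(u,v)\mapsto(1-u,1-v)$). On each piece of the shuffle grid both $G_s$ and $\widehat G_s$ are explicit piecewise linear functions, so the check reduces to a finite case analysis like the one in Proposition~\ref{pr:DE}. Once $\mu_R(G_s)=4s$ is established, Proposition~\ref{pr:zgornja1} gives $\mu_T(G_s)\le \tfrac14\cdot 4s+\tfrac14=\tfrac14+s$. Combined with the lower bound, this yields $\mu_T(G_s)=\tfrac14+s$, so $(\mu_R(G_s),\mu_T(G_s))=(4s,\tfrac{4s+1}{4})$, which covers the whole segment $\E\F$ as $s$ runs over $[0,\tfrac1{12}]$.

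The main obstacle is the exact value $\mu_R(G_s)=4s$. I am confident about the two endpoints, but for intermediate $s$ the radial asymmetry of this family could come out different from $4s$, since every other step depends on it. If it does, I would keep the same combinatorial type (permutation $(2,4,1,3)$, all $\omega=1$) and let the four lengths depend on two parameters. I would then choose them so that $\mu_R$ is an explicit function $r(s)$ of the parameters and $\mu_T=\tfrac14(r+1)$ is attained at $(l_1,l_1+l_3)$. If $r(s)$ ends up being some other continuous function from $0$ to $\tfrac13$, the intermediate value theorem would still give every point of $\E\F$. For that route, continuity of $s\mapsto\mu_R(G_s)$ follows because $\mu_R$ is $2$-Lipschitz in the sup norm and $s\mapsto G_s$ is continuous, while Proposition~\ref{pr:zgornja1} together with the lower bound at $(l_1,l_1+l_3)$ keeps the point on the line.
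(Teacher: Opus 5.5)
Your family $G_s$ does realize $\E\F$, and your scheme is the paper's. With $r=4s$ and $a=l_4=\tfrac{1-3r}{4}$ (so $l_1=a+r$), $G_s$ coincides with the paper's $F_r$ except inside the square $[2a+2r,3a+3r]\times[0,a+r]$. There the paper cuts your third piece into blocks of widths $a$ and $r$ and swaps them. Your witness $(l_1,l_1+l_3)$ for $\mu_T$ is the paper's point $(\tfrac{1+r}{4},\tfrac{1+r}{2})$.

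As written, however, the proposal does not prove the statement. The one substantial claim, $\mu_R(G_s)=4s$, is left open, and in the paper this is essentially the whole proof. Two details of your sketch are also off. First, the corner $(l_1,l_3)=(a+r,a+r)$ is not a witness, since both $G_s$ and $\widehat G_s$ vanish there. Second, the intermediate-value fallback is not sound. The evaluation at $(l_1,l_1+l_3)$ gives only $\mu_T(G_s)\ge\tfrac14+s$. That puts the point on the line $t=\tfrac14 r+\tfrac14$ only if $\mu_R(G_s)\le 4s$; otherwise nothing forces it onto $\E\F$.

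The gap closes in a few lines.

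\textbf{Lower bound.} It comes for free: Proposition~\ref{pr:zgornja1} together with $\mu_T(G_s)\ge\tfrac14+s$ forces $\mu_R(G_s)\ge 4s$.

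\textbf{Upper bound.} The identity $G(u,v)-\widehat G(u,v)=\widehat G(1-u,1-v)-G(1-u,1-v)$ shows it suffices to prove $G_s-\widehat G_s\le r$. Write $G_s(u,v)-\widehat G_s(u,v)=\nu([0,u]\times[0,v])$, where $\nu$ is the mass of $G_s$ minus its image under $(x,y)\mapsto(1-x,1-y)$. The pieces then pair up as follows.
\begin{enumerate}
\item Piece $1$ and the rotated piece $4$ both lie on the line $y=x+a+r$ and cancel over $x\in[0,a]$. What remains is a positive mass $\nu_P$ of total $r$ (piece $1$ over $x\in[a,a+r]$).
\item Piece $4$ and the rotated piece $1$ cancel over $x\in[3a+3r,1]$, leaving only a negative mass.
\item In the band $y\ge 3a+2r$, the rotated piece $3$ is exactly piece $2$ translated left by $r$.
\item In the band $y\le a+r$, the rotated piece $2$ is exactly piece $3$ translated left by $r$.
\end{enumerate}
If $\tau$ is the translate of a segment mass $\sigma$ by $(-r,0)$, then $\tau([0,u]\times[0,v])=\sigma([0,u+r]\times[0,v])\ge\sigma([0,u]\times[0,v])$. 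So both band contributions are nonpositive on every quadrant, and
\[
G_s(u,v)-\widehat G_s(u,v)\le \nu_P([0,u]\times[0,v])\le r .
\]
Hence $\mu_R(G_s)=4s$, and your argument then yields $\mu_T(G_s)=\tfrac14+s$.

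This is shorter than the paper's case analysis in $u$. The same decomposition also works for the paper's $F_r$, with translates by $a+r$ in place of $r$.
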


\begin{figure}[h]
    \centering
    \includegraphics[width=0.6\linewidth]{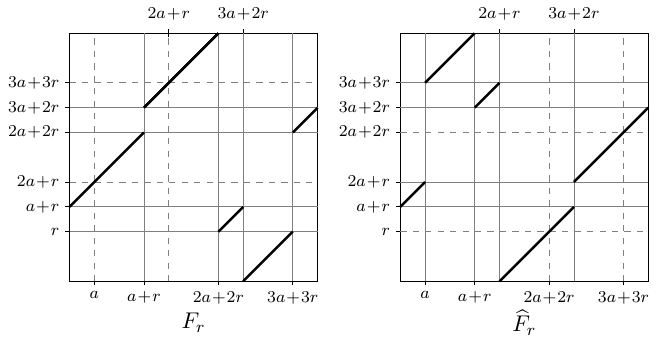}
    \caption{Mass distribution of copulas $F_r$ and $\widehat{F}_r$ for $r=\tfrac15$, hence $a=\tfrac{1}{10}$.}
    \label{fig:10}
\end{figure}

\begin{proof}
Let $r \in [0, \tfrac13]$ and denote by $a = \tfrac{1+r}{4}-r = \tfrac{1-3r}{4}$. Note that $4a + 3r = 1$. Let $F_r$ be shuffle of $M$
\begin{align*}
    F_r &= M(5, (\tfrac{1+r}{4},\tfrac{1+r}{2},\tfrac{3-r}{4},\tfrac{3+3r}{4}), (3,5,2,1,4), (1,1,1,1,1)) \\
    &= M(7, (a,a+r,2a+r,2a+2r,3a+2r,3a+3r), (3,4,6,7,2,1,5), (1,1,1,1,1,1,1)).
\end{align*}
Then 
$$
F_r^T = M(7, (r,a+r,2a+r,2a+2r,3a+2r,4a+2r), (6,5,1,2,7,3,4), (1,1,1,1,1,1,1))
$$
and
$$
\widehat{F}_r = M(7, (a,a+r,2a+r,2a+2r,3a+2r,3a+3r), (3,7,6,1,2,4,5), (1,1,1,1,1,1,1)).
$$
The mass distributions of copulas $F_r$ and $\widehat{F}_r$ are depicted in Figure~\ref{fig:10}.  

For any $r \in [0, \tfrac13]$ we have
$$ F_r(\tfrac{1+r}{4}, \tfrac{1+r}{2}) - \widehat{F}_r(\tfrac{1+r}{4}, \tfrac{1+r}{2}) = F_r(a+r, 2a+2r) - \widehat{F}_r(a+r, 2a+2r)= a+r - a = r,$$
so, $\mu_R(F_r) \ge r$. In order to prove that $\mu_R(F_r) \le r$ we need to show that $|F_r(u,v) - \widehat{F}_r(u,v)| \le r$ for any $(u,v) \in \II^2$. We consider several cases, depending on the value of $u$.

\textbf{Case I.} Suppose first that $u \le a$. In this case $F_r(u,v) = \widehat{F}_r(u,v)$, so $|F_r(u,v) - \widehat{F}_r(u,v)| = 0 \le r$.

\textbf{Case II.} Suppose next that $a \le u \le a+r$. First note that 
\begin{align*}
F_r(a+r, v) &= \begin{cases}
    0; &\text{if } 0 \le v \le a+r, \\
    v-a-r; &\text{if } a+r \le v \le 2a+2r, \\
    a+r; &\text{if } 2a+2r \le v \le 1,
\end{cases} \quad \text{and} \\
\widehat{F}_r(a+r, v) &= \begin{cases}
    0; &\text{if } 0 \le v \le a+r, \\
    v-a-r; &\text{if } a+r \le v \le 2a+r, \\
    a; &\text{if } 2a+r \le v \le 4a+2r, \\
    v-3a-2r; &\text{if } 4a+2r \le v \le 1,
\end{cases}    
\end{align*}
so that 
$$F_r(a+r, v) - \widehat{F}_r(a+r, v) = \begin{cases}
    0; &\text{if } 0 \le v \le 2a+r, \\
    v-2a-r; &\text{if } 2a+r \le v \le 2a+2r, \\
    r; &\text{if } 2a+2r \le v \le 4a+2r, \\
    1-v; &\text{if } 4a+2r \le v \le 1.
\end{cases}
$$
For any $u \in [a, a+r]$ we have
$$0 \le F_r(u, v) - \widehat{F}_r(u, v) \le F_r(a+r, v) - \widehat{F}_r(a+r, v) \le r,$$
so  $|F_r(u,v) - \widehat{F}_r(u,v)| \le r$.

\textbf{Case III.} Assume now that $a+r \le u \le 2a+r$. Note that copulas $F_r$ and $\widehat{F}_r$ have the same mass distribution on the rectangle $[a+r,2a+r]\times[0,1]$, so 
$$F_r(u, v) - \widehat{F}_r(u, v) = F_r(a+r, v) - \widehat{F}_r(a+r, v) \in [0,r]$$
for any $u \in [a+r, 2a+r]$.

\textbf{Case IV.} Assume that $2a+r \le u \le 2a+2r$. In this case we have
\begin{align*}
F_r(u, v) &= \begin{cases}
    0; &\text{if } 0 \le v \le a+r, \\
    v-a-r; &\text{if } a+r \le v \le 2a+2r, \\
    a+r; &\text{if } 2a+2r \le v \le 3a+2r, \\
    v-2a-r; &\text{if } 3a+2r \le v \le 4a+2r, \\
    \min\{u,v-2a-r\}; &\text{if } 4a+2r \le v \le 1, 
\end{cases} \quad \text{and} \\
\widehat{F}_r(u, v) &= \begin{cases}
    \min\{u-2a-r,v\}; &\text{if } 0 \le v \le r, \\
    u-2a-r; &\text{if } r \le v \le a+r, \\
    u+v-3a-2r; &\text{if } a+r \le v \le 2a+r, \\
    u-a-r; &\text{if } 2a+r \le v \le 3a+2r, \\
    u+v-1; &\text{if } 3a+2r \le v \le 1, 
\end{cases}    
\end{align*}
so that 
$$F_r(u, v) - \widehat{F}_r(u, v) = \begin{cases}
    \max\{2a+r-u,-v\}; &\text{if } 0 \le v \le r, \\
    2a+r-u; &\text{if } r \le v \le 2a+r, \\
    v-u; &\text{if } 2a+r \le v \le 2a+2r, \\
    2a+2r-u; &\text{if } 2a+2r \le v \le 4a+2r, \\
    \min\{2a+2r-u,1-v\}; &\text{if } 4a+2r \le v \le 1. 
\end{cases}
$$
It follows that
$$F_r(u, v) - \widehat{F}_r(u, v) \in [-r,r]$$
for any $u \in [2a+r, 2a+2r]$.

\textbf{Case V.} Assume finally that $u \ge 2a+2r$. Then $u \ge \tfrac{1+r}{2} \ge \tfrac12$, so $1-u \le \tfrac 12$. By previous cases we have 
$$|F_r(u,v) - \widehat{F}_r(u,v)| = |F_r(1-u,1-v) - \widehat{F}_r(1-u,1-v)| \le r.$$ 

We have thus proved that $\mu_R(F_r) = r$. 
By Proposition \ref{pr:zgornja1} it follows that $\mu_T(F_r) \le \tfrac{1+r}{4}$. Furthermore, 
$$ F_r(\tfrac{1+r}{4}, \tfrac{1+r}{2}) - F_r(\tfrac{1+r}{2}, \tfrac{1+r}{4})= \tfrac{1+r}{4} - 0 = \tfrac{1+r}{4},$$
so $\mu_T(F_r) = \tfrac{1+r}{4}$.
Finally, point $\PP$ on line segment $\E\F$ has coordinates $\PP(r, \tfrac{1+r}{4})$, where $r \in [0, \tfrac13]$, 
so $\PP = (\mu_R(F_r),\mu_T(F_r))$.
\end{proof}

Next proposition shows that the set $\Omega_{R,T}$ is radially convex (with respect to the origin $\A(0,0)$).

\begin{proposition} \label{pr:radial}
    Let $C \in \CC$ be a copula such that $(\mu_R(C),\mu_T(C)) = \PP$. For any $\lambda \in \II$ there exists copula $C_\lambda \in \CC$, such that $(\mu_R(C_\lambda),\mu_T(C_\lambda)) = \lambda \PP$.
\end{proposition}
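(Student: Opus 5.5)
We take the convex combination of $C$ with the independence copula $\Pi(u,v)=uv$:
$$C_\lambda = \lambda C + (1-\lambda)\Pi .$$
Since $\CC$ is convex, $C_\lambda$ is a copula for every $\lambda\in\II$.

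\textbf{Step 1: the two operations are affine.} Both $C\mapsto C^T$ and $C\mapsto \widehat{C}$ are affine maps on $\CC$. For transposition this is immediate. For the survival copula, $\widehat{C}(u,v)=u+v-1+C(1-u,1-v)$ is the sum of a fixed function and a linear function of $C$. Since $\lambda+(1-\lambda)=1$, the fixed part $u+v-1$ is reproduced exactly, so
$$\widehat{C_\lambda} = \lambda\widehat{C} + (1-\lambda)\widehat{\Pi}, \qquad C_\lambda^T=\lambda C^T+(1-\lambda)\Pi^T .$$

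\textbf{Step 2: $\Pi$ is fully symmetric.} We have $\Pi^T=\Pi$. Also
$$\widehat{\Pi}(u,v)=u+v-1+(1-u)(1-v)=uv=\Pi(u,v),$$
so $\Pi$ is both exchangeable and radially symmetric.

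\textbf{Step 3: conclusion.} Combining Steps 1 and 2 gives, pointwise on $\II^2$,
$$C_\lambda-C_\lambda^T=\lambda\,(C-C^T), \qquad C_\lambda-\widehat{C_\lambda}=\lambda\,(C-\widehat{C}).$$
Taking the maximum of absolute values over $\II^2$ yields $\mu_T(C_\lambda)=\lambda\mu_T(C)$ and $\mu_R(C_\lambda)=\lambda\mu_R(C)$. Hence $(\mu_R(C_\lambda),\mu_T(C_\lambda))=\lambda\PP$.

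The only point needing care is that the survival map is affine rather than linear, so the constant term must cancel. It does, because the coefficients of the combination sum to one. Any $D_4$-symmetric copula could replace $\Pi$; $M$ would also work.
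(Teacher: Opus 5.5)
Your proof is correct and is essentially the paper's argument. The paper takes $C_\lambda=\lambda C+(1-\lambda)M$ instead of using $\Pi$, and relies on the same two facts: transposition and the survival map commute with convex combinations, and the added copula is both exchangeable and radially symmetric. One small sharpening of your closing remark: full $D_4$-symmetry is more than you need, since exchangeability together with radial symmetry suffices, which is exactly why $M$ works even though it is not $D_4$-symmetric.
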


\begin{proof}
   Let $C_\lambda = \lambda C + (1-\lambda)M$. Since $\C_\lambda$ is a convex combination of two copulas, it is a copula. Note that $C_\lambda^T = \lambda C^T + (1-\lambda)M$ and $\widehat{C}_\lambda = \lambda \widehat{C} + (1-\lambda)M$, since $M$ is exchangeable and radially symmetric. Now, for any $(u,v) \in \II^2$ we have
   \begin{align*}
       |C_\lambda(u,v)-C_\lambda(v,u)| &= |\lambda C(u,v) + (1-\lambda)M(u,v) - (\lambda C(v,u) + (1-\lambda)M(v,u))| \\
       &= \lambda|C(u,v)-C(v,u)|,
   \end{align*}
   so
   \begin{align*}
       \mu_T(C_\lambda) &= \max\big\{|C_\lambda(u,v)-C_\lambda(v,u)| \colon u,v\in \II\big\} \\
       &= \lambda \max\big\{|C(u,v)-C(v,u)| \colon u,v\in \II\big\} = \lambda\mu_T(C).
   \end{align*}
   Similarly, 
   $|C_\lambda(u,v)-\widehat{C}_\lambda(u,v)| = \lambda|C(u,v)-\widehat{C}(u,v)|$,
   so $\mu_R(C_\lambda) = \lambda\mu_R(C)$. It follows that $(\mu_R(C_\lambda),\mu_T(C_\lambda)) = \lambda \PP$, which finishes the proof.
\end{proof}

We now collect our findings in the main theorem of this section. 

\begin{theorem} \label{th:realization}
       Let $\PP$ be a point in hexagon $\A\B\C\D\E\F$. Then there exists a copula $C \in \CC$, such that $(\mu_R(C),\mu_T(C)) = \PP$.
\end{theorem}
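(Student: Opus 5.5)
The plan is to combine the boundary realizations of Propositions \ref{pr:BC}, \ref{pr:CD}, \ref{pr:DE} and \ref{pr:EF} with the radial convexity of Proposition \ref{pr:radial}. The hexagon $\A\B\C\D\E\F$ is star-shaped with respect to the vertex $\A(0,0)$. Every point $\PP\neq\A$ of the hexagon lies on a segment $\A\mathcal{Q}$, where $\mathcal{Q}$ is the point where the ray from $\A$ through $\PP$ leaves the hexagon. So the first step is to check geometrically that every such exit point $\mathcal{Q}$ lies on the polygonal chain $\B\C\cup\C\D\cup\D\E\cup\E\F$. The edges $\A\B$ and $\F\A$ lie on the coordinate axes and therefore contain $\A$, so they are never exit points of rays from $\A$.

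To justify this, parametrize rays from $\A$ by the angle $\theta\in[0,\tfrac{\pi}{2}]$. For $\theta=0$ the ray exits at $\B$, and for $\theta=\tfrac{\pi}{2}$ it exits at $\F$. As $\theta$ increases, the exit point first travels along $\B\C$ (vertical line $r=\tfrac13$), then along $\C\D$, then along $\D\E$, and finally along $\E\F$. The only delicate point is the non-convexity at $\D$. For that, observe that the ray from $\A$ through $\D(\tfrac3{10},\tfrac15)$ has slope $\tfrac23$. Rays with slope in $[\tfrac12,\tfrac23]$ hit $\C\D$ (on the line $t=\tfrac12-r$) before anything else. Rays with slope in $[\tfrac23,1]$ hit $\D\E$ (on the line $t=4r-1$), and the whole segment from $\A$ to that hit point stays inside the hexagon. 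One checks this directly: along the ray $t=kr$ with $k\in[\tfrac23,1]$, the point satisfies $t\le 4r-1$ exactly from the hit point on, and before that it lies below $\C\D$ or left of $\D$. In fact, the intersection point with $\D\E$, namely $r=1/(4-k)$, satisfies $r\le \tfrac13$ and $kr\ge \tfrac12-r$. I expect this verification of star-shapedness at the reflex vertex $\D$ to be the only real obstacle, and it reduces to elementary inequalities. Alternatively, one can simply note that the hexagon is the union of the triangles $\A\B\C$, $\A\C\D$, $\A\D\E$ and $\A\E\F$, each of which is a convex set containing $\A$. This decomposition is cleaner, and I would use it.

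With that decomposition, take $\PP$ in one of these triangles, say $\A\mathcal{X}\mathcal{Y}$ with $\mathcal{X}\mathcal{Y}$ one of the edges $\B\C$, $\C\D$, $\D\E$, $\E\F$. If $\PP=\A$, take $C=M$, which has $\mu_R(M)=\mu_T(M)=0$. Otherwise, extend the ray $\A\PP$ until it meets $\mathcal{X}\mathcal{Y}$ at a point $\mathcal{Q}$, so that $\PP=\lambda\mathcal{Q}$ for some $\lambda\in(0,1]$. By the corresponding Proposition among \ref{pr:BC}--\ref{pr:EF}, there is a copula $C$ with $(\mu_R(C),\mu_T(C))=\mathcal{Q}$. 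Then Proposition \ref{pr:radial} gives the copula $C_\lambda=\lambda C+(1-\lambda)M$ with $(\mu_R(C_\lambda),\mu_T(C_\lambda))=\lambda\mathcal{Q}=\PP$. This covers the whole hexagon and finishes the proof.
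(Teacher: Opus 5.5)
Your proposal is correct and follows the paper's proof essentially verbatim: the paper also writes the hexagon as the union of the triangles $\A\B\C$, $\A\C\D$, $\A\D\E$, $\A\E\F$, writes each point as $\lambda\T$ with $\T$ on $\B\C\cup\C\D\cup\D\E\cup\E\F$, and combines Propositions \ref{pr:BC}--\ref{pr:EF} with Proposition \ref{pr:radial}. Your separate treatment of $\PP=\A$ via $M$ is just the case $\lambda=0$ of Proposition \ref{pr:radial}.
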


\begin{proof}
   Note that hexagon $\A\B\C\D\E\F$ is a union of four triangles
   $$\A\B\C\D\E\F = \A\B\C \cup \A\C\D \cup \A\D\E \cup \A\E\F,$$
   so, for any point $\PP \in \A\B\C\D\E\F$ there exist a point $\T$ on the union of the edges $\B\C \cup \C\D \cup \D\E \cup \E\F$ and $\lambda \in \II$ such that $\PP = \lambda \T$. By Propositions \ref{pr:BC}, \ref{pr:CD}, \ref{pr:DE}, and \ref{pr:EF} there exists a copula $C \in \CC$, such that $(\mu_R(C),\mu_T(C)) = \T$. By Proposition \ref{pr:radial} there exists a copula $C_\lambda \in \CC$, such that $(\mu_R(C_\lambda),\mu_T(C_\lambda)) = \lambda \T = \PP$, which finishes the proof. 
\end{proof}

\section{Concluding remarks} \label{sec:concluding}

We determine the range of the non-exchangeability of a copula if its radial asymmetry is given and the other way around. The resulting polygon of all possible pairs $(\mu_R(C),\mu_T(C))$ is a non-convex hexagon. The area of this hexagon is equal to $\tfrac{17}{180}$, which represents $85\%$ of the area of the square $[0,\tfrac13]^2$. Our results demonstrate that both types of the asymmetry are not closely related. This is intuitively clear since they measure two very different types of asymmetry.

\section*{Acknowledgments}

The authors acknowledge financial support from the ARIS (Slovenian Research and Innovation Agency), research core funding No. P1-0222 (Damjana Kokol Bukov\v{s}ek, Bla\v{z} Moj\v{s}kerc, and Toma\v{z} Ko\v{s}ir after January 1, 2026), No. P1-0448 (Matja\v{z} Omladi\v{c}, and Toma\v z Ko\v sir until December 31, 2025), and projects J1-70034 (Damjana Kokol Bukov\v{s}ek, Bla\v{z} Moj\v{s}kerc) and J1-50002 (Toma\v{z} Ko\v{s}ir).

\bibliographystyle{plain}
\bibliography{biblio}

\end{document}